\newtheorem{thm}{Theorem}[section]
\newtheorem{cor}[thm]{Corollary}
\newtheorem{prop}[thm]{Proposition}
\newtheorem{lem}[thm]{Lemma}
\theoremstyle{remark}
\newtheorem{rem}[thm]{Remark}
\theoremstyle{definition}
\newtheorem{defn}[thm]{Definition}
\newcommand{\FI}{\mathrm{FI}}
\newcommand{\OI}{\mathrm{OI}}
\newcommand{\Vect}{\mathrm{Vec}}
\newcommand{\Hom}{\mathrm{Hom}}
\newcommand{\Tableaux}{\mathrm{Tableaux}}
\newcommand{\F}{F}
\DeclareMathAlphabet{\functor}{OT1}{pzc}{m}{it}
\newcommand{\fn}[1]{\vphantom{\scalebox{1.3}{\mbox{\fbox{\tt{1}}}}}\raisebox{1.5pt}{\scalebox{.7}{\mbox{\fbox{\tt{#1}}}}}}
\title[On computing the eventual behavior of an $\FI$-module over $\mathbb{Q}$]{On computing the eventual behavior of \\ an $\FI$-module over the rational numbers}
\author{John D. Wiltshire-Gordon}
\begin{document}
\begin{abstract}
We give a formula for the eventual multiplicities of irreducible representations appearing in a finitely-presented $\FI$-module over the rational numbers.  The result relies on structure theory due to Sam-Snowden \cite{SS16}.
\end{abstract}

\maketitle
\section{Introduction}
\noindent
Let $\FI$ be the category whose objects are the finite sets $[n] = \{1, \ldots, n\}$ for $n \in \mathbb{N}$ and whose morphisms are injections.  An \textbf{$\FI$-module over the rational numbers} is a functor $\FI \to \Vect_{\mathbb{Q}}$.

A familiar example is the free $\mathbb{Q}$-vector space functor $\F^1 \colon \FI \to \Vect_{\mathbb{Q}}$ given by
$$
\F^1[n] = \mathbb{Q}^n;
$$
an even simpler example is the constant functor $\F^0[n] = \mathbb{Q}$.  As suggested by the superscript, these two modules are part of a family.  In general, $\F^k[n] = \mathbb{Q} \FI(k, n)$, where $\FI(k, n)$ denotes the set of injections $[k] \to [n]$.  The $\F^k$ are called ``free $\FI$-modules'' for reasons that we explain in \S\ref{sec:structure}.

The free $\FI$-modules are combinatorially straightforward.  To build a running example of more realistic difficulty, define the vector space
$$
E[n] = \mathbb{Q} \cdot \{ \mbox{symbols $z_{ijk}$ with $i, j, k \in [n]$ distinct} \} / (z_{ijk} + z_{jkl} + z_{kli} + z_{lij} = 0),
$$
noting that an injection $f \colon [x] \to [y]$ induces a linear map $E[x] \to E[y]$ by the rule
$$
z_{ijk} \mapsto z_{f(i) \, f(j) \, f(k)}
$$
so that $E$ is an $\FI$-module.  For $n \leq 10$, a direct computation gives 
$$
\begin{array}{rccccccccccc}
n=& 0 & 1 & 2 & 3 & 4 & 5 & 6 & 7 & 8 & 9 & 10 \\
E[n] = & 0 & 0 & 0 & \mathbb{Q}^6 & \mathbb{Q}^{18} & \mathbb{Q}^{30} & \mathbb{Q}^{44} & \mathbb{Q}^{56} & \mathbb{Q}^{76} & \mathbb{Q}^{99} & \mathbb{Q}^{125}, \\
\end{array}
$$
a sequence with growing dimension.  However, accounting for the symmetric group $\mathfrak{S}_n$ action on the space $E[n]$ gives a sequence of representations:
\ytableausetup{mathmode, boxsize=2pt}
\ytableausetup{aligntableaux=center}
$$
\begin{array}{rccccccccccc}
n=& 0 & 1 & 2 & 3 & 4 & 5 & 6 & 7 & 8 & 9 & 10 \\
 &  &  &  & \ydiagram{3} &  &  &  &  & & & \\
 &  &  &  & 2\ydiagram{2,1} & 3\ydiagram{3,1} & 2\ydiagram{4,1} & 2\ydiagram{5,1} & 2\ydiagram{6,1} & 2\ydiagram{7,1} & 2\ydiagram{8,1} & 2\ydiagram{9,1} \\
  &  &  &  & \ydiagram{1,1,1} & \ydiagram{2,2} & 2\ydiagram{3,2} & \ydiagram{4,2} & \ydiagram{5,2} & \ydiagram{6,2} & \ydiagram{7,2} & \ydiagram{8,2} \\
   &  &  &  &  & 2\ydiagram{2,1,1} & 2\ydiagram{3,1,1} & 2\ydiagram{4,1,1} & 2\ydiagram{5,1,1} & 2\ydiagram{6,1,1} & 2\ydiagram{7,1,1} & 2\ydiagram{8,1,1} \\
 &  &  &  &  & \vphantom{\ydiagram{1,1,1,1,1}} \ydiagram{1,1,1,1} &  &  &  &  &  &  \\
  &  &  &  &  &  &  & \ydiagram{3,3} &  & &  &  \\
\end{array}
$$
where we have used the usual indexing of irreducible representations by partitions.  A stabilization pattern is now apparent: just add more boxes in the top row.

Church-Farb named this phenomenon \textbf{representation stability}, observing it in several contexts \cite{CF13}.  Later, in work with Ellenberg, these authors introduced $\FI$-modules as a firmer algebraic foundation \cite{CEF15}.

The goal of this paper is to provide a formula for the limiting multiplicities as $n \to \infty$, combining structure theory due to Sam-Snowden \cite{SS16} with ideas appearing in this author's dissertation \cite{WG16}.  In \S\ref{sec:compute}, we will be able to prove computationally the limiting multiplicities of the $\FI$-module $E$:
$$
\begin{array}{cccccccc}
\mu(\ydiagram{1}^+, E) = 2 & &
\mu(\ydiagram{2}^+, E) = 1 & &
\mu(\ydiagram{1,1}^+, E) = 2 & &
\mu(\lambda^+, E) = 0 & \mbox{otherwise}, \\
\end{array}
$$
where the superscript $+$ stands for an invisible long top row.  We make these multiplicities precise in Definition \ref{defn:mu}.

Theorem \ref{thm:main} gives a formula for $\mu(\lambda^+, M)$ for any finitely-presented $\FI$-module $M$ in terms of the corank of a certain combinatorial matrix construction $\mathbb{A}_{\lambda}$ applied to any presentation matrix for $M$.  The result is similar to \cite[Theorem 4.3.5]{WG16}, which applies in the context of categories of dimension zero; see \cite{WG15}.  However, Theorem \ref{thm:main} does not follow directly because $\FI$ is dimension one.  
\subsection{Finitely presented $\FI$-modules}
\noindent
To compute the eventual multiplicities of an $\FI$-module $M$, Theorem \ref{thm:main} requires as input a presentation matrix, describing $M$ as the cokernel of a map between direct sums of free modules.  
The Noetherian property of $\FI$-modules guarantees that any finitely-generated $\FI$-module may be written in this fashion with a finite matrix.  The precise setup will be given in \S\ref{sec:structure}.

For now we say that a presentation matrix takes the form
$$
\begin{blockarray}{ccccc}
 & y_1 & y_2 & \cdots & y_r \\
\begin{block}{c[cccc]}
 x_1  &  &  & & \\
 x_2  &  &  & & \\
\vdots & & & & \\
 x_g  & & & & \\
\end{block}
\end{blockarray}
$$
for some generation degrees $x_1, \ldots, x_g \in \mathbb{N}$ and relation degrees $y_1, \ldots y_r \in \mathbb{N}$, and that the entry in position $(i, j)$ is a formal linear combinations of injections $[x_i] \to [y_j]$.  Each column of a presentation matrix imposes some relation.  The $\FI$-module $E$ is presented by the $1 \times 1$ matrix
$$
\begin{blockarray}{cc}
 & 4 \\
\begin{block}{c[c]}
 3  & \fn{123} + \fn{234} + \fn{341} + \fn{412}  \\
\end{block}
\end{blockarray},
$$
where, for example, we have written $\fn{412} \in \FI(3,4)$ for the injection
$$
\begin{array}{rcl}
1 & \mapsto & 4 \\
2 & \mapsto & 1 \\
3 & \mapsto & 2, \\
\end{array}
$$
and the other injections are similar.

\subsection{Tableau combinatorics}
 Write $\mathbb{N}_+ = \{1, 2, 3, \ldots \}$ for the poset of positive natural numbers, and $
\mathbb{N}_+^2$ for the product poset, where the ordering is given by $(i, j) \leq (i',j')$ if $i \leq i'$ and $j \leq j'$.  Our pictures of subsets of $\mathbb{N}_+^2$ use matrix coordinates so that order ideals are upper-left justified.

An order ideal of the product poset $\mathbb{N}_+^2$ is called a \textbf{diagram}.  
An injection $t \colon [k] \to \mathbb{N}_+^2$ is called a \textbf{tableau} if, for all $m \leq k$, the subset $t([m]) \subset \mathbb{N}_+^2$ is a diagram.  In particular, $\mathrm{im}(t)$ is a diagram---the \textbf{shape} of $t$.

If $\lambda$ is a diagram of size $k$, write $|\lambda|=k$.  The collection of tableaux with shape $\lambda$ will be written
$$
\Tableaux(\lambda) = \{ t \colon [k] \to \mathbb{N}_+^2 \mbox{ so that $\mathrm{im}(t) = \lambda$} \}.
$$
The row lengths of $\lambda$, written $\lambda_1, \lambda_2, \ldots$, form a non-increasing sequence with $k = \lambda_1 + \lambda_2 + \cdots $.  In this way, $\lambda$ may be considered an integer partition of $k$.
\noindent
\subsection{Three combinatorial functions: $\zeta$, $\xi$, and $\chi$.}
We describe three functions needed in the construction of the matrix $\mathbb{A}_{\lambda}(f)$, and that therefore appear indirectly in the statement of Theorem \ref{thm:main}.  

The lexicographic ordering on the elements of $\lambda$ provides a distinguished bijection $t_{\lambda} \colon [k] \to \lambda$.
Each element $t \in \Tableaux(\lambda)$ relates to this distinguished element by means of a unique permutation $\zeta(t) \in \mathfrak{S}_k$ satisfying $t \circ \zeta(t) = t_{\lambda}$.  Similarly, if $p \in \FI(k,n)$ is an injection, write $\xi(p) \in \mathfrak{S}_k$ for the unique permutation with the property that $p \circ \xi(p)^{-1}$ is monotone.

The definition of $\chi$ is slightly more involved.  Given functions $a, b \colon [k] \to \mathbb{N}_+$, write $(a,b) \colon [k] \to \mathbb{N}_+^2$ for $l \mapsto (a(l), b(l))$.  Let
$$
\chi(a, b) = \begin{cases*}
      (-1)^{\zeta(t)} & if $(a, b) \colon [k] \to \mathbb{N}_+^2$ defines a valid tableau $t$ \\
      0        & otherwise,
    \end{cases*}
$$
where $(-1)^{\sigma}$ denotes the sign of a permutation $\sigma \in \mathfrak{S}_k$.

\subsection{Construction of $\mathbb{A}_{\lambda}(f)$ for $f \colon [x] \to [y]$ an injection and $\lambda$ a diagram}
Let $k = |\lambda|$, write $\OI(k,n)$ for the set of monotone injections $[k] \to [n]$, and write $r, c \colon \mathbb{N}_+^2 \to \mathbb{N}_+$ for the two projection maps $r(i,j) = i$ and $c(i,j) = j$.  We construct a matrix $\mathbb{A}_{\lambda}(f)$ with 
$$
\begin{array}{crr}
& \mbox{rows indexed by pairs} & (p, t) \in \OI(k,x) \times \Tableaux(\lambda) \phantom{.} \\
\mbox{and} & \mbox{columns indexed by pairs} & (q, u) \in \OI(k,y) \times \Tableaux(\lambda). \\
\end{array}
$$
The entry in position $((p,t), (q,u))$ is the rational number given by the formula
$$
\mathbb{A}_{\lambda}(f)_{(p,t), (q,u)} = \begin{cases*} 
\chi(r \circ u, \, c \circ t \circ \xi(f \circ p)) & if $f \circ p$ and $q$ have the same image \\
0 & otherwise.
\end{cases*}
$$
Extend the definition of $\mathbb{A}_{\lambda}$ linearly to formal combinations of injections:
$$
\begin{array}{ccc}
\mathbb{A}_{\lambda}(f + g) = \mathbb{A}_{\lambda}(f) + \mathbb{A}_{\lambda}(g) & & \mathbb{A}_{\lambda}(\alpha f) = \alpha \mathbb{A}_{\lambda}(f).
\end{array}
$$
\subsection{The main result}
Suppose that $Z$ is a presentation matrix for an $\FI$-module $M$ with generation degrees $x_1, \ldots, x_g \leq x_{\max}$ and relation degrees $y_1, \ldots y_r \leq y_{\max}$.  Recall that the corank of a rational matrix is its row-count minus its rank.

\begin{thm} \label{thm:main}
For any diagram $\lambda$, the multiplicity of $\lambda^+$ in $M$ is given by
$$
\mu(\lambda^+, M) = \mathrm{corank} \left( \, \mathbb{A}_{\lambda}Z \right)
$$
where $\lambda^+$ denotes the diagram obtained by attaching a long top row to $\lambda$, and $\mathbb{A}_{\lambda}Z$ denotes the rational block matrix obtained by applying the construction $\mathbb{A}_{\lambda}$ to the entries of the presentation matrix $Z$.  In particular, $\mu(\lambda^+, M) = 0$ whenever $|\lambda| > x_{\max}$, since in this case $\mathbb{A}_{\lambda}Z$ has no rows.
\end{thm}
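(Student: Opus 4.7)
The plan is to realize the assignment $M \mapsto \mu(\lambda^+, M)$ as a right-exact functor on finitely presented $\FI$-modules, and to identify its restriction to free modules with the combinatorial construction $\mathbb{A}_\lambda$. Once this is done, applying the functor to the presentation
$$
\bigoplus_j \F^{y_j} \xrightarrow{Z} \bigoplus_i \F^{x_i} \to M \to 0
$$
yields a right-exact sequence whose outer terms are canonically the direct sums $\bigoplus_i \mathbb{Q}^{\OI(k, x_i) \times \Tableaux(\lambda)}$ and $\bigoplus_j \mathbb{Q}^{\OI(k, y_j) \times \Tableaux(\lambda)}$, so that $\mu(\lambda^+, M) = \dim \coker(\mathbb{A}_\lambda Z) = \mathrm{corank}(\mathbb{A}_\lambda Z)$. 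Right-exactness of $\mu(\lambda^+, -)$ should follow from two ingredients: for each fixed $n$ the functor $M \mapsto \Hom_{\mathfrak{S}_n}(V_{(n-|\lambda|, \lambda)}, M[n])$ is exact by semisimplicity of rational $\mathfrak{S}_n$-modules, and the Sam-Snowden structure theory supplies a uniform stable range on which these dimensions agree with $\mu(\lambda^+, M)$.

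Next I would compute $\mu(\lambda^+, \F^x)$ explicitly and identify it with $\mathbb{Q}^{\OI(k, x) \times \Tableaux(\lambda)}$. Since $\F^x[n] = \mathrm{Ind}_{\mathfrak{S}_{n-x}}^{\mathfrak{S}_n} \mathbb{Q}$, Frobenius reciprocity and iterated branching show that the multiplicity of $V_{(n-k, \lambda)}$ in $\F^x[n]$ equals $\binom{x}{k} f^\lambda$ for $n \gg 0$, matching the row count of $\mathbb{A}_\lambda$ applied to a single injection out of $[x]$. A natural basis for this multiplicity space is indexed by pairs $(p, t) \in \OI(k,x) \times \Tableaux(\lambda)$: the monotone injection $p$ selects which among the $x$ distinguished labels of $\F^x$ occupy the non-top rows of $\lambda^+$ inside $[n]$, and $t$ picks out a Specht-module vector in the associated copy of $V_\lambda$.

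The heart of the proof is verifying that the linear map on eventual multiplicity spaces induced by postcomposition with an injection $f \colon [x] \to [y]$ is given by $\mathbb{A}_\lambda(f)$. Writing $f \circ p \colon [k] \to [y]$, the permutation $\xi(f \circ p)$ records the resorting of its image; the vanishing of $\mathbb{A}_\lambda(f)_{(p,t),(q,u)}$ unless $q$ is the monotone reordering of $\mathrm{im}(f \circ p)$ reflects that the map preserves the underlying $k$-subset; and the expression $\chi(r \circ u, \, c \circ t \circ \xi(f \circ p))$ tracks how the Young symmetrizer transforms: the sign $(-1)^{\zeta(t)}$ inside $\chi$ is exactly the column-antisymmetrization sign, while the vanishing case of $\chi$ records a column repetition that annihilates the Specht vector. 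The main obstacle is the meticulous bookkeeping here: one must fix a compatible-in-$n$ basis for each $V_{\lambda^+}$-isotypic component of $\F^x[n]$ via an explicit Young-symmetrizer realization, then verify term by term that the matrix entry agrees with the Specht calculation. Sign conventions require particular care, but the functions $\zeta$, $\xi$, and $\chi$ appear to have been engineered precisely for this purpose.
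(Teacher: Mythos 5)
Your proposal takes a genuinely different route from the paper.  The paper's proof is entirely structural: it first establishes (Lemma~\ref{lem:mu}) that $\mu(\lambda^+, M) = \dim_{\mathbb{Q}}\Hom(M, \mathcal{M}(\lambda))$, using Sam--Snowden's injectivity of $\mathcal{M}(\lambda)$ (Theorem~\ref{thm:ssinjective}) together with their K-theory basis for finitely generated $\FI$-modules; then it applies the left-exact contravariant functor $\Hom(-, \mathcal{M}(\lambda))$ to the presentation, simplifies via Yoneda's lemma to $\mathcal{M}(\lambda)[x_i]$, and reads off the corank of $\mathcal{M}(\lambda)(Z)$.  The combinatorial heavy lifting is cleanly confined to \S\ref{sec:induced}, where the explicit model $V$ for $(\functor{i}_k)_!W$ yields $\mathcal{M}(\lambda)(f) = \mathbb{A}_{\lambda}(1_x)^{-1}\cdot\mathbb{A}_{\lambda}(f)$, with the invertible block-diagonal correction $\bigoplus_i \mathbb{A}_\lambda(1_{x_i})^{-1}$ disappearing when you take coranks.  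You instead work covariantly with the exact functor $M \mapsto \Hom_{\mathfrak{S}_n}(\mathcal{W}(\lambda^{+(n-k)}), M[n])$ and semisimplicity, which is a reasonable alternative and would sidestep the K-theory input; what this buys is freedom from \cite[Proposition 4.9.2]{SS16}, and what it costs is that all the combinatorics lands in your ``heart of the proof.''

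The gap is that this heart is not actually a proof, and as stated it is not quite correct.  You claim that the map $\mathcal{F}_n(\F^{y}) \to \mathcal{F}_n(\F^{x})$ induced by an injection $f$ (viewed as a map $\F^{y}\to\F^{x}$) ``is given by $\mathbb{A}_\lambda(f)$,'' but this is false on the nose even in the simplest case $\lambda = \emptyset$: with the obvious basis (the sum of all injections), the induced map on $\mathfrak{S}_n$-invariants is multiplication by $(n-x)!/(n-y)!$, not by $\mathbb{A}_\emptyset(f)=1$.  In general the matrix you get is $\mathbb{A}_\lambda(f)$ only up to conjugation and/or scaling by $n$-dependent factors, and the choice of ``compatible-in-$n$'' bases that makes these disappear is exactly the delicate part.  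Since the theorem asserts a corank equality, an invertible correction is harmless---this is precisely the role the paper assigns to $\mathbb{A}_\lambda(1_x)^{-1}$---but you neither exhibit the correction nor argue that it is block-diagonal and invertible, so the passage from your sketched identification to $\mathrm{corank}(\mathbb{A}_\lambda Z)$ is unjustified.  To complete your approach you would need to produce, for each $x$ and each large $n$, an explicit isomorphism $\mathcal{F}_n(\F^x) \cong \mathbb{Q}^{\OI(k,x)\times\Tableaux(\lambda)}$ (say via Young symmetrizers applied to injections), then verify the block-matrix entry formula and finally check that whatever correction remains is invertible and independent of the columns---roughly reproducing the content of \S\ref{sec:induced}, but conjugated into a less convenient model of the isotypic spaces.
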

%We give the proof in \S\ref{sec:proof}.
\begin{cor}[Eventual invariants] \label{cor:invariants}
As $n \to \infty$, the eventual multiplicity of the trivial $\mathfrak{S}_n$-representation is
$$
\mu(\emptyset^+, M) = \mathrm{corank} \, \left( \varepsilon Z \right),
$$
where $\varepsilon Z$ is the matrix obtained from $Z$ by replacing every injection with $1 \in \mathbb{Q}$.
\end{cor}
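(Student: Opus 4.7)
The plan is to deduce the corollary directly from Theorem \ref{thm:main} by specializing to $\lambda = \emptyset$. Since the diagram $\emptyset^+$ consists of a single long top row, which under the standard indexing corresponds to the trivial $\mathfrak{S}_n$-representation, both sides of the claimed equation describe the same quantity, and the only real content is to identify the operator $\mathbb{A}_{\emptyset}$ with the augmentation $\varepsilon$.

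First I would unpack the definition of $\mathbb{A}_{\lambda}(f)$ with $k = |\lambda| = 0$. The indexing sets $\Tableaux(\emptyset)$ and $\OI(0,n)$ each consist of a single element --- the empty function --- so for every injection $f \colon [x] \to [y]$ the block $\mathbb{A}_{\emptyset}(f)$ has a single row and a single column. The compatibility condition ``$f \circ p$ and $q$ have the same image'' holds automatically, since both images are empty, and the remaining factor $\chi(r \circ u,\, c \circ t \circ \xi(f \circ p))$ is evaluated at pairs of empty functions. The empty function is a valid tableau of shape $\emptyset$, and the permutation $\zeta$ at this unique tableau is the identity in $\mathfrak{S}_0$, so the sign $(-1)^{\zeta}$ equals $+1$. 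I conclude that $\mathbb{A}_{\emptyset}(f) = [1]$ for every injection $f$.

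Extending linearly then gives $\mathbb{A}_{\emptyset}\bigl(\sum_i \alpha_i f_i\bigr) = \bigl[\sum_i \alpha_i\bigr]$, which is exactly the rule defining $\varepsilon$. Applying this entrywise to the presentation matrix $Z$ yields $\mathbb{A}_{\emptyset} Z = \varepsilon Z$, and Theorem \ref{thm:main} then immediately supplies
$$
\mu(\emptyset^+, M) = \mathrm{corank}(\mathbb{A}_{\emptyset} Z) = \mathrm{corank}(\varepsilon Z).
$$
There is no real obstacle here; the entire argument is a careful specialization of Theorem \ref{thm:main}. The only point that requires attention is the bookkeeping at $k = 0$: one must confirm that the conventions for the empty function (as monotone injection, as tableau of shape $\emptyset$, and as element of $\mathfrak{S}_0$) interact with the piecewise definition of $\chi$ in the expected way.
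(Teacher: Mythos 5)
Your argument is exactly the paper's: the paper's one-line proof simply asserts that $\mathbb{A}_{\emptyset}$ coincides with $\varepsilon$, and your proposal unpacks that identification carefully at $k = 0$ (singleton indexing sets, automatic image condition, $\chi$ of the empty tableau equals $+1$) before invoking Theorem \ref{thm:main}. The reasoning is correct and the level of detail is appropriate, if more explicit than what the paper records.
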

\begin{proof}
The construction $\mathbb{A}_{\emptyset}$ coincides with $\varepsilon$.
\end{proof}
\noindent
In the next statement, set $l_i = \lambda_i + |\lambda| - i$ for $i \in \{1, \ldots, |\lambda|\}$.

\begin{cor}[Eventual dimension] \label{cor:dims}
The sequence $n \mapsto \dim_{\mathbb{Q}} M$ eventually agrees with the polynomial
$$
n \mapsto \sum_{\lambda} \mathrm{corank} \left( \, \mathbb{A}_{\lambda}Z \right) \cdot \frac{\prod_{i < j} (l_i - l_j)}{\prod_i (l_i)!} \cdot \prod_i (n-l_i)
$$
where the sum ranges over all $\lambda$ with $|\lambda| \leq x_{\max}$, and $i, j \in \{1, \ldots, |\lambda|\}$.
\end{cor}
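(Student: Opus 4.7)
The plan is to combine the stabilized multiplicities given by Theorem \ref{thm:main} with a classical formula for the dimension of an irreducible $\mathfrak{S}_n$-representation.

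First, for all sufficiently large $n$, every irreducible constituent of $M[n]$ has the shape $\lambda^+_n = (n - |\lambda|,\, \lambda_1, \lambda_2, \ldots)$ for some $\lambda$ with $|\lambda| \leq x_{\max}$, and occurs with multiplicity $\mu(\lambda^+, M) = \mathrm{corank}(\mathbb{A}_\lambda Z)$ by Theorem \ref{thm:main}. Decomposing $M[n]$ into $\mathfrak{S}_n$-isotypic components yields
$$
\dim_{\mathbb{Q}} M[n] \;=\; \sum_{|\lambda| \leq x_{\max}} \mathrm{corank}(\mathbb{A}_\lambda Z) \cdot \dim S^{\lambda^+_n}.
$$

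Second, I would compute $\dim S^{\lambda^+_n}$ via the Weyl--Frobenius determinantal formula
$$
\dim S^\nu \;=\; \frac{n!\,\prod_{i<j}(\ell_i - \ell_j)}{\prod_i \ell_i!}, \qquad \ell_i = \nu_i + N - i,
$$
after padding $\nu = \lambda^+_n$ with zero rows to total length $N = |\lambda| + 1$. A routine index check gives $\ell_1 = n$ and $\ell_{i+1} = l_i$ for $i = 1, \ldots, |\lambda|$. Separating the factors in which $\ell_1 = n$ appears, the Vandermonde splits as $\prod_{i<j}(\ell_i - \ell_j) = \prod_i (n - l_i) \cdot \prod_{i<j}(l_i - l_j)$ and the denominator as $\prod_i \ell_i! = n! \cdot \prod_i (l_i)!$, so the two $n!$ factors cancel to give
$$
\dim S^{\lambda^+_n} \;=\; \frac{\prod_{i<j}(l_i - l_j)}{\prod_i (l_i)!} \cdot \prod_i (n - l_i).
$$
Substituting this into the sum produces the stated polynomial in $n$.

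There is no substantive obstacle beyond bookkeeping; the heart of the argument is a direct substitution. The one subtlety worth stating is the meaning of ``eventually'': one must verify that for $n$ large enough, both the multiplicities have stabilized to the values of Theorem \ref{thm:main}, and the shape $\lambda^+_n$ is a genuine partition of $n$ (requiring $n - |\lambda| \geq \lambda_1$) for every $\lambda$ appearing in the finite sum. Since there are only finitely many relevant $\lambda$, both conditions hold simultaneously once $n$ exceeds a finite threshold determined by $Z$, and the claimed polynomial identity follows.
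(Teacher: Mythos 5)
Your proof is correct and follows the same approach as the paper, which simply cites the Frobenius character formula \cite[(4.11)]{FH04}; you have supplied the bookkeeping (padding to $|\lambda|+1$ rows, the identifications $\ell_1 = n$ and $\ell_{i+1} = l_i$, the cancellation of $n!$) that the paper's one-line proof leaves implicit. The closing remark about ``eventually'' is appropriately handled and is addressed in the paper by Remark~\ref{rem:onset}.
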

\begin{proof}
By the Frobenius character formula; see \cite[(4.11)]{FH04}.
\end{proof}

\begin{rem} \label{rem:onset}
The onset of stabilization is mostly understood; see \cite[Theorem~3.3.4]{CEF15}, \cite[Theorem A]{CE17}, \cite[Remark 7.4.6]{SS16}, \cite[Theorem A]{Ramos15}, for example.  In particular, by \cite[Theorem A]{CE17}, the regularity of $M$ is bounded above by $x_{\max} + y_{\max} - 1$, and by \cite[Theorem 1.1]{NSS17}, the regularity is bounded below
$$
\max_i (i+\deg H^i_{\mathfrak{m}} M) \leq \mathrm{reg}(M),
$$
so we must have $\deg H^i_{\mathfrak{m}} M = -\infty$ once $i \geq x_{\max} + y_{\max}$, and this means that the polynomial named $q$ in \cite[Theorem 5.1.3]{SS16} has degree at most $x_{\max} + y_{\max} - 1$ by \cite[Proposition 5.3.1]{SS16}.  Therefore, the eventual multiplicities computed in Theorem \ref{thm:main} are attained once $n \geq x_{\max}+ y_{\max} $.
\end{rem}

\subsection{Structure of this paper}
In \S\ref{sec:compute}, we use Theorem \ref{thm:main} to compute the eventual multiplicities for the example $\FI$-module $E$.  In \S\ref{sec:structure}, we discuss finitely presented $\FI$-modules and recall some of their structure theory, especially results due to Sam-Snowden and Nagpal.  We highlight the role of ``induced'' $\FI$-modules, by which we mean those left-Kan-extended from the symmetric groups.  In \S\ref{sec:induced}, we provide an explicit description of these induced modules by their action matrices.  In \S\ref{sec:infinite}, we introduce the infinite diagram $\lambda^+$ and make precise some ideas that had been treated intuitively.  In \S\ref{sec:proof} we give the proof of Theorem \ref{thm:main}.  Finally, in \S\ref{sec:code}, we provide computer code making Theorem \ref{thm:main} algorithmic.  This code relies on the tableaux-generation routines provided by the computer algebra software Sage \cite{sagemath}.

\subsection{Acknowledgements}
A preliminary form of these results were presented at the American Institute of Mathematics in July 2016.  Thanks to Eric Ramos for a helpful conversation.  The author acknowledges support from the NSF through grant DMS-1502553.

\section{Example calculation} \label{sec:compute}
We use Theorem \ref{thm:main} to compute the eventual multiplicities in the running example $E$.  Supporting Sage code may be found in \S\ref{sec:code}.  In order to apply Theorem~\ref{thm:main}, we must compute the matrix $\mathbb{A}_{\lambda}Z$ for every $\lambda$, where
$$
Z = \begin{blockarray}{cc}
 & 4 \\
\begin{block}{c[c]}
 3  & \fn{123} + \fn{234} + \fn{341} + \fn{412}  \\
\end{block}
\end{blockarray}.
$$
To this end, we must compute the sum of the four matrices $\mathbb{A}_{\lambda}(\fn{123})$, $\mathbb{A}_{\lambda}(\fn{234})$, $\mathbb{A}_{\lambda}(\fn{341})$, and $\mathbb{A}_{\lambda}(\fn{412})$.  Fortunately, if $|\lambda| > 3$ then the resulting matrices have no rows, and so their coranks are zero.  This leaves a finite number of choices for $\lambda$.

\ytableausetup{aligntableaux=center}
\ytableausetup{mathmode, boxsize=3pt}

We will build two of these matrices by hand, and then use Sage to do the others.  Let $\lambda = \ydiagram{2}$ and $f = \fn{123}$.  
\ytableausetup{mathmode, boxsize=10pt}
There are three monotone injections $[2] \to [3]$, six monotone injections $[2] \to [4]$, and only one tableau of shape $\lambda$, which we name $\theta = \begin{ytableau} 1 & 2 \end{ytableau}$---shorthand for the function $1 \mapsto (1,1), 2 \mapsto (1,2)$.  The matrix $\mathbb{A}_{\lambda}(f)$ is then given by
$$
\begin{blockarray}{ccccccc}
 & \fn{12} \times \theta & \fn{13} \times \theta & \fn{14} \times \theta & \fn{23} \times \theta & \fn{24} \times \theta & \fn{34} \times \theta  \\
\begin{block}{c[cccccc]}
 \fn{12} \times \theta & 1 & 0 & 0 & 0 & 0 & 0 \\
 \fn{13} \times \theta & 0 & 1 & 0 & 0 & 0 & 0 \\
 \fn{23} \times \theta & 0 & 0 & 0 & 1 & 0 & 0 \\
\end{block}
\end{blockarray}.
$$
The other three matrices $\mathbb{A}_{\lambda}(\fn{234})$, $\mathbb{A}_{\lambda}(\fn{341})$, and $\mathbb{A}_{\lambda}(\fn{412})$, have the same format:
$$
\begin{array}{ccc}
\left[
\begin{array}{cccccc}
 0 & 0 & 0 & 1 & 0 & 0 \\
 0 & 0 & 0 & 0 & 1 & 0 \\
 0 & 0 & 0 & 0 & 0 & 1 \\
\end{array}
\right] & 
\left[
\begin{array}{cccccc}
 0 & 0 & 0 & 0 & 0 & 1 \\
 0 & 1 & 0 & 0 & 0 & 0 \\
 0 & 0 & 1 & 0 & 0 & 0 \\
\end{array}
\right] &
\left[
\begin{array}{cccccc}
 0 & 0 & 1 & 0 & 0 & 0 \\
 0 & 0 & 0 & 0 & 1 & 0 \\
 1 & 0 & 0 & 0 & 0 & 0 \\
\end{array}
\right] \\
\end{array}.
$$
\noindent
Consequently,
\ytableausetup{mathmode, boxsize=3pt}
$$
\mathbb{A}_{\ydiagram{2}}\left(\fn{123}+\fn{234}+\fn{341}+\fn{412}\right) = \left[
\begin{array}{cccccc}
 1 & 0 & 1 & 1 & 0 & 1 \\
 0 & 2 & 0 & 0 & 2 & 0 \\
 1 & 0 & 1 & 1 & 0 & 1 \\
\end{array}
\right],
$$
and so $\mu(\ydiagram{2}^+,E) = 1$ by Theorem \ref{thm:main}, since this matrix has corank $1$.

For our second example matrix, let $\lambda = \ydiagram{2,1}$, and set
\ytableausetup{mathmode, boxsize=10pt}
$$
\begin{array}{ccc}
\gamma = \begin{ytableau} 1 & 2 \\ 3 \end{ytableau} & & \delta = \begin{ytableau} 1 & 3 \\ 2 \end{ytableau} \\
\end{array}.
$$
We compute $\mathbb{A}_{\lambda}\left( \fn{123} \right)$:
$$
\begin{blockarray}{ccccccccc}
& \fn{123} \hspace{-2pt} \times \hspace{-2pt} \delta & \fn{123} \hspace{-2pt} \times \hspace{-2pt} \gamma & \fn{124} \hspace{-2pt} \times \hspace{-2pt} \delta & \fn{124} \hspace{-2pt} \times \hspace{-2pt} \gamma & \fn{134} \hspace{-2pt} \times \hspace{-2pt} \delta & \fn{134} \hspace{-2pt} \times \hspace{-2pt} \gamma & \fn{234} \hspace{-2pt} \times \hspace{-2pt} \delta & \fn{234} \hspace{-2pt} \times \hspace{-2pt} \gamma  \\
\begin{block}{c[cccccccc]}
 \fn{123} \hspace{-2pt} \times \hspace{-2pt} \delta & -1 & 0 & 0 & 0 & 0 & 0 & 0 & 0 \\
 \fn{123} \hspace{-2pt} \times \hspace{-2pt} \gamma & 0 & 1 & 0 & 0 & 0 & 0 & 0 & 0 \\
\end{block}
\end{blockarray}.
$$
In order to perform these computations automatically, run the code from \S\ref{sec:code} in a fresh Sage session, and then
\begin{verbatim}
for k in range(4):
    for shape in Partitions(k):
        injections = [[1, 2, 3], [2, 3, 4], [3, 4, 1], [4, 1, 2]]
        coefficients = [1, 1, 1, 1]
        AAZ = sum([alpha * AA(shape, 3, f, 4)
                   for alpha, f in zip(coefficients, injections)])
        print "corank AA_" + str(shape) + "(Z) = " + \
              str(AAZ.nrows() - AAZ.rank())
\end{verbatim}
producing the output
\begin{verbatim}
corank AA_[](Z) = 0
corank AA_[1](Z) = 2
corank AA_[2](Z) = 1
corank AA_[1, 1](Z) = 2
corank AA_[3](Z) = 0
corank AA_[2, 1](Z) = 0
corank AA_[1, 1, 1](Z) = 0
\end{verbatim}
matching the table of low degrees given in the introduction.
From Corollary \ref{cor:dims}, we conclude that
\begin{align*}
\mathrm{dim}_{\mathbb{Q}} E(n) & = 2\left( n-1 \right) + n(n-3)/2+2(n-1)(n-2)/2 \\
&= n(3n-5)/2.
\end{align*}
for all $n \gg 0$.  In fact, using Remark \ref{rem:onset}, it suffices to take $n \geq 7$.

\section{Background on the structure theory of $\FI$-modules} \label{sec:structure}
We explain free $\FI$-modules and describe the sort of matrix that defines a map between frees.  We then discuss some of the basic theory of $\FI$-modules.
\subsection{Free $\FI$-modules and Yoneda's lemma}
The free $\FI$-module $\F^k$ is the linearization of the functor represented by $[k] \in \FI$.  Explicitly,
$$
\F^k[n] = \mathbb{Q} \cdot \{\mbox{injections } [k] \to [n] \},
$$
and $\FI$-morphisms act by post-composition.  The free module $F^k$ has a special vector sitting in degree $k$, which is written $1_k$, and stands for the identity injection $[k] \to [k]$.  This vector, $1_k \in \F^k[k]$, is the \textbf{standard basis vector} for the free module $\F^k$ in the same way that the multiplicative identity in a ring is the standard basis vector for the ring as a rank-one free module.  Yoneda's lemma says that, for any $\FI$-module $M$, the map
$$
\Hom(\F^k, M) \longrightarrow M[k]
$$
sending an $\FI$-module map $\varphi \colon \F^k \to M$ to its evaluation $\varphi(1_k) \in M[k]$ is an isomorphism.  In other words, the basis vector $1_k$ may be sent anywhere, and once its destination is determined, the rest of the map $\varphi$ is determined as well.

\subsection{Finitely presented $\FI$-modules}
Suppose $M$ is an $\FI$-module that is generated by vectors $m_1, \ldots, m_g$ where $m_i \in M[x_i]$ for various $x_1, \ldots, x_g \in \mathbb{N}$, possibly with repetition.  By Yoneda's lemma, each element $m_i$ determines a map $\F^k \to M$.  Summing these maps, we obtain
$$
\bigoplus_{i=1}^g \F^{x_i} \longrightarrow M,
$$
which is a surjection since every generator is in its image.  (Specifically, $m_i$ is hit by the standard basis vector $1_{x_i}$.) The kernel of this surjection is an $\FI$-submodule.  

In order for $M$ to be finitely presented, we want this submodule to be finitely generated.  Amazingly, this is always the case by a fundamental property of $\FI$-modules called Noetherianity.  %by analogy with the corresponding property for modules over a ring.  
In the case of $\mathbb{Q}$-coefficients, Noetherianity follows from work of Snowden \cite[Theorem 2.3]{snowden13}; the result for coefficients in $\mathbb{Z}$, or in a general Noetherian ring, is due to Church-Ellenberg-Farb-Nagpal \cite{CEFN14}.

Using Noetherianity, pick a sequence of generators for the kernel $c_1, \ldots, c_r$ with 
$$
c_j \in \left(\bigoplus_{i=1}^g \F^{x_i}\right)[y_j]
$$
for various $y_1, \ldots, y_r \in \mathbb{N}$, once again with repetition permitted.  Projecting each $c_j$ onto each summand $\F^{x_i}$, obtain a collection of ``matrix entries''
$$
z_{ij} \in \F^{x_i}[y_j] = \mathbb{Q}\FI(x_i,y_j)
$$
so that $c_j = \sum_i z_{ij}$.  The $z_{ij}$ then define a matrix $Z$ of the form indicated in the introduction.  Moreover, the module $M$ is the cokernel of the map defined on basis vectors by the rule $1_{y_j} \mapsto c_j$.  This is the sense in which the columns of $Z$ impose relations on generators indexed by the rows.

\subsection{Notation for $\mathfrak{S}_k$-representations}
We introduce the Specht modules and provide a formula for their $\mathfrak{S}_k$-actions by matrices.
Recall the function $\chi$ from the introduction:
$$
\chi(a, b) = \begin{cases*}
      (-1)^{\zeta(t)} & if $(a, b) \colon [k] \to \mathbb{N}_+^2$ defines a valid tableau $t$ \\
      0        & otherwise,
    \end{cases*}
$$
where $(-1)^{\zeta(t)}$ denotes the sign of $\zeta(t) \in \mathfrak{S}_k$, the permutation satisfying $t \circ \zeta(t) = t_{\lambda}$.

If $\lambda$ is a diagram, $|\lambda|= k$, and $\sigma \in \mathfrak{S}_k$, define a $\Tableaux(\lambda) \times \Tableaux(\lambda)$ matrix $\mathbb{W}_{\lambda}(\sigma)$ with $(t, u)$-entry given by the formula
$$
\mathbb{W}_{\lambda}(\sigma)_{t, u} = \chi(r \circ u \circ \sigma, c \circ t).
$$
\ytableausetup{mathmode, boxsize=3pt}
For example, ordering the tableaux of shape $ \lambda = \ydiagram{2,2,1}$ as follows,
\ytableausetup{mathmode, boxsize=10pt}
$$
\Tableaux(\lambda) = \left\{ 
\begin{array}{ccccc}
\begin{ytableau}
1 & 2 \\
3 & 4 \\
5
\end{ytableau} &
\begin{ytableau}
1 & 2 \\
3 & 5 \\
4
\end{ytableau} &
\begin{ytableau}
1 & 3 \\
2 & 4 \\
5
\end{ytableau} &
\begin{ytableau}
1 & 3 \\
2 & 5 \\
4
\end{ytableau} &
\begin{ytableau}
1 & 4 \\
2 & 5 \\
3
\end{ytableau}
\end{array}
\right\},
$$
we have
$$
\mathbb{W}_{\lambda}(1) = \left[
\begin{array}{ccccc}
 1 & 0 & 0 & 0 & 1 \\
 0 & -1 & 0 & 0 & 0 \\
 0 & 0 & -1 & 0 & 0 \\
 0 & 0 & 0 & 1 & 0 \\
 0 & 0 & 0 & 0 & -1 \\
\end{array}
\right].
$$
It is no coincidence that this matrix is invertible over $\mathbb{Z}$.  In fact, the matrices $\mathbb{W}_{\lambda}(\sigma)$ yield a module for a certain connected, two-object groupoid that is equivalent to $\mathfrak{S}_k$.  In order to obtain a module for the symmetric group proper, we perform a standard correction, defining
\begin{equation} \label{eq:specht}
\mathcal{W}(\lambda)(\sigma) = \mathbb{W}_{\lambda}(1)^{-1} \cdot \mathbb{W}_{\lambda}(\sigma).
\end{equation}
\begin{thm}[Alfred Young 1928 \cite{Young77}] \label{thm:specht}
If $\lambda$ is a diagram of size $k$, the assignment $\sigma \mapsto \mathcal{W}(\lambda)(\sigma)$ has the property that, for all $\sigma, \tau \in \mathfrak{S}_k$,
$$
\mathcal{W}(\lambda)(\sigma) \cdot \mathcal{W}(\lambda)(\tau) = \mathcal{W}(\lambda)(\tau \circ \sigma),
$$
and so defines a module for the symmetric group $\mathfrak{S}_k$.  This module is irreducible after tensoring with $\mathbb{Q}$.  Moreover, every isomorphism class of irreducible representation appears exactly once among the $\mathcal{W}(\lambda)$.% as $\lambda$ ranges over diagrams with $|\lambda|=k$.
\end{thm}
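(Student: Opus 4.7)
The plan is to recast $\mathbb{W}_\lambda$ as a functor from a connected two-object groupoid to $\Vect_\mathbb{Q}$, deduce the composition law for $\mathcal{W}(\lambda)$ from the general fact that any isomorphism in such a groupoid can be used to strictify it to an honest group representation, and finally identify $\mathcal{W}(\lambda)$ with the classical Specht module to conclude irreducibility and exhaustion. The asymmetric formula $\mathbb{W}_\lambda(\sigma)_{t, u} = \chi(r \circ u \circ \sigma, c \circ t)$---in which $u$ contributes only its row-function and $t$ only its column-function---naturally suggests a groupoid $\mathcal{G}$ with objects $R$ (``row'') and $C$ (``column''), each hom-set a copy of $\mathfrak{S}_k$, and each matrix $\mathbb{W}_\lambda(\sigma)$ viewed as the image of a corresponding morphism $R \to C$.

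The first technical step is to verify functoriality of $\sigma \mapsto \mathbb{W}_\lambda(\sigma)$ on $\mathcal{G}$, which in explicit matrix form amounts to the identity
$$
\mathbb{W}_\lambda(\sigma) \cdot \mathbb{W}_\lambda(1)^{-1} \cdot \mathbb{W}_\lambda(\tau) \;=\; \mathbb{W}_\lambda(\tau \circ \sigma).
$$
Expanding entrywise and summing over intermediate tableaux, this reduces to a combinatorial statement about $\chi$ and $\zeta$: I would expect the tableau constraints (that certain composite data define a valid tableau) to pin down the intermediate indices uniquely when the relevant $\chi$-factors are nonzero, collapsing each sum to a single term, with signs combining via the cocycle behavior of $\zeta$ under composition of permutations.

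Next, invertibility of $\mathbb{W}_\lambda(1)$: ordering tableaux lexicographically by $c \circ t$ and breaking ties by $r \circ t$ should make the matrix upper-triangular with diagonal entries $\mathbb{W}_\lambda(1)_{t,t} = (-1)^{\zeta(t)} \neq 0$, because the requirement that $(r \circ u, c \circ t)$ be a valid tableau forces $u$ and $t$ to coincide on enough initial data to enforce a strict order. Granting invertibility, every morphism in $\mathcal{G}$ is sent to an invertible linear map, so $\mathbb{W}_\lambda(1) \colon R \to C$ can be used to identify the two objects; the transported functor is a representation of $\mathrm{Aut}(R) = \mathfrak{S}_k$ sending $\sigma$ to $\mathbb{W}_\lambda(1)^{-1} \mathbb{W}_\lambda(\sigma) = \mathcal{W}(\lambda)(\sigma)$, which therefore satisfies the stated composition law automatically.

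To complete the proof, I would identify $\mathcal{W}(\lambda)$ with the Specht module $S^\lambda$. The tableaux in the sense of the paper are exactly the standard Young tableaux of shape $\lambda$, so $\dim \mathcal{W}(\lambda) = \dim_\mathbb{Q} S^\lambda$; an explicit intertwiner sending each $t$ to the corresponding polytabloid, checked on adjacent transpositions, yields the identification, from which irreducibility, pairwise non-isomorphism for distinct $\lambda$, and exhaustion of irreducibles follow from classical Specht theory. The principal obstacle will be the functoriality identity for $\mathbb{W}_\lambda$: its sign bookkeeping is delicate because the roles of rows and columns interchange across the composition $\mathbb{W}_\lambda(1)^{-1}$, and producing either a clean bijective proof or exploiting that $\mathcal{G}$ is generated by a small set of morphisms modulo simple relations (reducing functoriality to a short check on generators) is the real content of the argument.
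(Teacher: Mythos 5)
The paper does not prove Theorem~\ref{thm:specht}: it attributes the result to Young \cite{Young77}, points to Lascoux \cite{Lascoux01} for a modern treatment, and offers only the remark preceding the statement---that the matrices $\mathbb{W}_\lambda(\sigma)$ form a module for a connected two-object groupoid equivalent to $\mathfrak{S}_k$---as an indication of why the construction works. Your sketch picks up precisely on that remark, and the reduction you observe is exact: the composition law for $\mathcal{W}(\lambda)$ is, after left-multiplying by $\mathbb{W}_\lambda(1)$, the identity
$$
\mathbb{W}_\lambda(\sigma)\cdot\mathbb{W}_\lambda(1)^{-1}\cdot\mathbb{W}_\lambda(\tau)=\mathbb{W}_\lambda(\tau\circ\sigma).
$$

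The genuine gap is that this identity is never established. Repackaging it as ``functoriality on the groupoid $\mathcal{G}$'' and then asserting that strictification along $\mathbb{W}_\lambda(1)$ makes the composition law hold ``automatically'' is not a reduction: the functoriality \emph{is} the composition law in disguise, so the combinatorics with $\chi$ and $\zeta$ still has to be done in full. You acknowledge this yourself---the sign bookkeeping is, as you say, ``the real content of the argument''---but you then only speculate that the intermediate sums collapse to single terms. Without that calculation, or without the explicit intertwiner to the classical Specht module (also only sketched, and which would make functoriality a corollary of classical Specht theory rather than a route to it), the argument does not close. The supporting pieces are essentially fine: triangularity of $\mathbb{W}_\lambda(1)$ under a lex order on $c\circ t$ does hold, though in the paper's worked example the single off-diagonal nonzero entry falls below rather than above the diagonal, and the paper's tableaux are indeed the standard Young tableaux. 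Since the paper outsources the proof to the literature, there is no internal argument to compare against; on its own terms, what you have is a plausible plan rather than a proof.
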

For a modern account of this construction, see \cite{Lascoux01}.  For another perspective on the function $\chi$, see \cite{WWZ17}.

\subsection{Induced $\FI$-modules}
For each $k$, write
$$
\functor{i}_k \colon \mathfrak{S}_k \to \FI
$$
for the inclusion of the symmetric group.  The corresponding restriction operation, written $(\functor{i}_k)^*$, takes an $\FI$-module $M$ to the vector space $M[k]$ together with its natural action of $\mathfrak{S}_k$.  For formal reasons, there exists a left adjoint to restriction, written $(\functor{i}_k)_!$.  This functor is left Kan extension along $\functor{i}_k$.  Its defining universal property says that, for any $\mathfrak{S}_k$-representation $W$,
$$
\Hom((\functor{i}_k)_! W, M) \cong \Hom_{\mathfrak{S}_k}(W, (\functor{i}_k)^* M).
$$
In section \S\ref{sec:induced}, we will give a concrete description of the $\FI$-module $(\functor{i}_k)_! W$ in terms of the $\mathfrak{S}_k$-action on $W$.  %For now, we investigate some ideas that are immediate from the adjunction equation, and would perhaps be obscured by the specifics of the construction in \S\ref{sec:induced}.  

Any module of the form $(\functor{i}_k)_! W$ is called an \textbf{induced module}, and similarly for a direct sum of such modules.  If $W$ is projective, then $(\functor{i}_k)_! W$ is also projective.  %, and so induced modules are a good supply of projective modules.
Working over $\mathbb{Q}$, every $\mathfrak{S}_k$-representation is projective, and so we obtain a nice class of projective modules
$$
\mathcal{M}(\lambda) = (\functor{i}_k)_! \mathcal{W}(\lambda)
$$
for any digram $\lambda$ with $|\lambda| = k$.

\begin{prop} \label{prop:summand}
The free module $\F^k$ is an induced $\FI$-module.  Specifically,
$$
\F^k \cong (\functor{i}_k)_! \left( \mathbb{Q} \mathfrak{S}_k \right).
$$
Consequently, and making use of $\mathbb{Q}$ coefficients, the free module $\F^k$ decomposes as a direct sum
$$
\F^k \cong \bigoplus_{|\lambda| = k} \mathcal{M}(\lambda)^{\oplus \Tableaux(\lambda)}.
$$
\end{prop}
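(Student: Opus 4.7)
The plan is to prove the first isomorphism $\F^k \cong (\functor{i}_k)_!(\mathbb{Q}\mathfrak{S}_k)$ by a Yoneda-style comparison of representable functors, and then deduce the direct sum decomposition from the known decomposition of the regular representation together with the fact that $(\functor{i}_k)_!$ is a left adjoint, hence commutes with direct sums.

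For the first isomorphism, I would compute, for an arbitrary $\FI$-module $M$, the Hom-set out of each side. By Yoneda's lemma (as recalled in the paper),
\[
\Hom(\F^k, M) \;\cong\; M[k].
\]
On the other side, the defining adjunction of left Kan extension gives
\[
\Hom\bigl((\functor{i}_k)_!(\mathbb{Q}\mathfrak{S}_k),\, M\bigr) \;\cong\; \Hom_{\mathfrak{S}_k}\bigl(\mathbb{Q}\mathfrak{S}_k,\, (\functor{i}_k)^*M\bigr) \;\cong\; M[k],
\]
where the last isomorphism sends an $\mathfrak{S}_k$-equivariant map $\varphi$ to $\varphi(1)$, since $\mathbb{Q}\mathfrak{S}_k$ is the regular representation. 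Both isomorphisms are natural in $M$, and it is a matter of chasing the identity map on $\F^k$ through the chain to check that the composite natural isomorphism is represented by a single $\FI$-module map, namely the one sending $1_k \in \F^k[k]$ to the identity element $1 \in \mathbb{Q}\mathfrak{S}_k \subset (\functor{i}_k)_!(\mathbb{Q}\mathfrak{S}_k)[k]$. Applying Yoneda to the resulting natural isomorphism of representable functors yields the isomorphism of $\FI$-modules.

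For the decomposition statement, I would use Theorem~\ref{thm:specht}, which tells us that the irreducible $\mathfrak{S}_k$-representations over $\mathbb{Q}$ are precisely the $\mathcal{W}(\lambda)$ with $|\lambda|=k$, each appearing with multiplicity $\dim_{\mathbb{Q}} \mathcal{W}(\lambda)$ in the regular representation. Since the rows of $\mathbb{W}_{\lambda}(\sigma)$ are indexed by $\Tableaux(\lambda)$, we have $\dim_{\mathbb{Q}} \mathcal{W}(\lambda) = |\Tableaux(\lambda)|$. Thus Wedderburn gives
\[
\mathbb{Q}\mathfrak{S}_k \;\cong\; \bigoplus_{|\lambda|=k} \mathcal{W}(\lambda)^{\oplus \Tableaux(\lambda)}.
\]
Now I apply $(\functor{i}_k)_!$ to both sides. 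Being a left adjoint, it preserves direct sums, and by definition $(\functor{i}_k)_! \mathcal{W}(\lambda) = \mathcal{M}(\lambda)$, yielding the desired decomposition of $\F^k$.

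The main potential obstacle is the first step: verifying that the composite natural isomorphism out of $\Hom(\F^k,-)$ and $\Hom((\functor{i}_k)_!\mathbb{Q}\mathfrak{S}_k,-)$ arises from an actual $\FI$-module map, rather than just giving an abstract bijection. However, this is exactly what the Yoneda lemma guarantees for any natural isomorphism between representable functors on $\FIMod$, so the difficulty is cosmetic. Everything else is a routine application of the definitions of Kan extension, Yoneda's lemma, and the Wedderburn decomposition.
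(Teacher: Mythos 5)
Your proof is correct and takes essentially the same approach as the paper: identify both $\F^k$ and $(\functor{i}_k)_!(\mathbb{Q}\mathfrak{S}_k)$ by comparing universal properties via Yoneda and the Kan-extension adjunction, then apply the Wedderburn decomposition of $\mathbb{Q}\mathfrak{S}_k$ together with additivity of $(\functor{i}_k)_!$ and the observation that $\dim\mathcal{W}(\lambda)=|\Tableaux(\lambda)|$. The only cosmetic difference is that the paper organizes the first step by factoring the functor $\functor{j}_k\colon\ast\to\FI$ (picking out $[k]$) as $\functor{i}_k\circ\functor{h}_k$ and composing left Kan extensions, whereas you compare the represented functors directly; both arguments reduce to the same chain of isomorphisms.
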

\begin{proof}
Write $\functor{j}_k \colon \ast \to \FI$ for the functor from the terminal category that picks out the object $[k]$.  Yoneda's lemma gives that $\F^k$ satisfies the universal property defining $(\functor{j}_k)_! \mathbb{Q}$.  On the other hand, the functor $\functor{j}_k$ factors as the composite $\functor{i}_k \circ \functor{h}_k$ where $\functor{h}_k \colon \ast \to \mathfrak{S}_k$ coincides with the inclusion of the trivial subgroup $\{1_k\} \subseteq \mathfrak{S}_k$.  By Frobenius reciprocity, the left Kan extension $(\functor{h}_k)_! \mathbb{Q}$ is more-commonly written as the induced module $\mathrm{Ind}_{\{1_k\}}^{\mathfrak{S}_k} \mathbb{Q}$, which is the regular representation $\mathbb{Q} \mathfrak{S}_k$.  From the representation theory of finite groups, we recall the decomposition
$$
\mathbb{Q} \mathfrak{S}_k \cong \bigoplus_{|\lambda| = k} \mathcal{W}(\lambda)^{\oplus \left( \dim \mathcal{W}(\lambda) \right)},
$$
from which we obtain the result using additivity of $(\functor{i}_k)_!$ and the sizes of the action matrices for $\mathcal{W}(\lambda)$.
\end{proof}

The results of this paper rely on the following theorem of Sam-Snowden.

\begin{thm}[\cite{SS16} Corollary 4.2.5] \label{thm:ssinjective}
The induced module $\mathcal{M}(\lambda)$ is an injective object in the abelian category of finitely generated $\FI$-modules over $\mathbb{Q}$.  
\end{thm}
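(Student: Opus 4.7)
The plan is to establish injectivity by showing $\mathrm{Ext}^1_{\FI}(N, \mathcal{M}(\lambda)) = 0$ for every finitely-generated $\FI$-module $N$. By the Sam-Snowden structure theorem, any such $N$ admits a finite filtration whose subquotients are either \emph{torsion} modules (finite-dimensional, eventually zero) or \emph{induced} modules $\mathcal{M}(\mu)$. The long exact Ext sequence then reduces the problem to these two cases.

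The induced case is immediate: every $\mathcal{M}(\mu)$ is projective, either by the abstract argument that $(\functor{i}_{|\mu|})_!$ is left adjoint to the exact functor $(\functor{i}_{|\mu|})^*$ and hence preserves projectives, or by Proposition \ref{prop:summand}, which exhibits $\mathcal{M}(\mu)$ as a direct summand of the free module $\F^{|\mu|}$. A projective has vanishing $\mathrm{Ext}^1$ against anything.

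The torsion case is the substantive content of the theorem. Given an extension $0 \to \mathcal{M}(\lambda) \to E \to N \to 0$ with $N$ torsion, I would aim to show that the maximal torsion submodule of $E$ maps isomorphically onto $N$, producing a splitting. The main obstacle is that torsion-freeness of $\mathcal{M}(\lambda)$ alone does not suffice: one also needs every element of $N$ to lift to a torsion element of $E$. This extra saturation property is precisely what Sam-Snowden establish by identifying the induced modules with injective envelopes of the simple objects in the Serre quotient of $\FI$-modules by torsion; rather than re-derive this deep input, I would cite \cite{SS16} and verify that the concrete construction of $\mathcal{M}(\lambda)$ given in \S\ref{sec:induced} matches their abstract setup.
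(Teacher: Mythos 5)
The paper does not prove this theorem: it is imported verbatim from Sam--Snowden \cite[Corollary 4.2.5]{SS16} and treated as a black box, so there is no in-paper argument to compare against. Your proposal, which ultimately also defers the hard step to \cite{SS16}, is closer to an unpacking of the citation than an independent proof, and the unpacking introduces an error.

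The first paragraph asserts that every finitely generated $\FI$-module over $\mathbb{Q}$ has a finite filtration whose subquotients are torsion or induced. This is not a theorem of Sam--Snowden, and it is in fact false. A counterexample is the kernel $K$ of the natural surjection $\F^1 \twoheadrightarrow \F^0$ (sending the standard generator $1_1$ to $1 \in \F^0[1]$). Then $K[n]$ is the standard representation $\mathcal{W}(n-1,1)$, $K$ is torsion-free and generated by a single sign vector in degree $2$, yet $K$ contains no nonzero torsion submodule (torsion-freeness) and no nonzero induced submodule: the only candidate generated in degree $2$ is $\mathcal{M}(\ydiagram[*(white)]{1,1})$, which has $\dim \mathcal{M}(\ydiagram[*(white)]{1,1})[3]=3>2=\dim K[3]$, and any $\mathcal{M}(\mu)$ with $|\mu|\ge 3$ is likewise too large in its generating degree. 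So $K$ admits no first step of the claimed filtration. ($K$ is also not projective, since $\Hom(\F^0,\F^1)=\F^1[0]=0$ means $\F^1\to\F^0$ does not split.) What Sam--Snowden actually prove is a four-term exact sequence $0 \to T_1 \to M \to I \to T_2 \to 0$ with $T_1,T_2$ torsion and $I$ induced; this is strictly weaker than a filtration by torsion and induced subquotients, and running your $\mathrm{Ext}$ reduction through it requires not just $\mathrm{Ext}^1(T,\mathcal{M}(\lambda))=0$ but also $\mathrm{Ext}^2(T,\mathcal{M}(\lambda))=0$ for torsion $T$, so the reduction does not cleanly isolate a single ``torsion case.''

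Your identification of the genuine difficulty is correct: the content is that every finitely generated torsion module maps trivially to $\mathcal{M}(\lambda)$ and, more deeply, that an extension of a torsion module by $\mathcal{M}(\lambda)$ splits because $\mathcal{M}(\lambda)$ is saturated. But since you (rightly) end by citing \cite{SS16} for exactly that input, the net effect is the same as the paper's bare citation, now preceded by a structural claim that needs to be replaced by the correct four-term sequence.
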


\begin{rem}[Nagpal's theorem on semi-induced shifts]
Even over $\mathbb{Z}$, the induced modules remain building-blocks for $\FI$-modules.  Nagpal's theorem says that every finitely generated $\FI$-module has some ``shift'' which is semi-induced, meaning that it has a filtration whose associated graded is an induced module.  For details on this fundamental result, see \cite{Nagpal15} or \cite{NSS17}.
\end{rem}

\subsection{Torsion and torsion-free $\FI$-modules}
An element $m \in M[x]$ of an $\FI$-module is called torsion if there exists some injection $f \colon [x] \to [y]$ so that $mf=0$.  An $\FI$-module is called \textbf{torsion} if all of its elements are torsion, and is called \textbf{torsion-free} if all of its torsion elements vanish.

\begin{prop} \label{prop:mtf}
The free modules $\F^k$ are torsion-free, and consequently the induced modules $\mathcal{M}(\lambda)$ are torsion-free as well.
\end{prop}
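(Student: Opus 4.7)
The plan is to handle $F^k$ directly from its definition as a linearization, then deduce the statement for $\mathcal{M}(\lambda)$ by exhibiting it as a direct summand of a free module using Proposition \ref{prop:summand}.

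For the first half, I would fix $k$ and an element $m \in F^k[x]$, writing it in the canonical basis as $m = \sum_{p \in \FI(k,x)} \alpha_p \, p$. Given any injection $f \colon [x] \to [y]$, the action is by post-composition, so $m \cdot f = \sum_p \alpha_p (f \circ p)$. The key observation is that post-composition with an injection $f$ is injective on $\FI(k,x)$: if $f \circ p = f \circ p'$, then $p = p'$ because $f$ has a left inverse on its image. Hence the vectors $\{f \circ p : p \in \FI(k,x)\}$ are distinct basis vectors of $F^k[y]$, and in particular linearly independent over $\mathbb{Q}$. Therefore $m \cdot f = 0$ forces every $\alpha_p = 0$, which gives $m = 0$. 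So $F^k$ is torsion-free.

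For the second half, Proposition \ref{prop:summand} gives a direct sum decomposition
$$
F^k \cong \bigoplus_{|\lambda|=k} \mathcal{M}(\lambda)^{\oplus \Tableaux(\lambda)},
$$
and since $\Tableaux(\lambda)$ is nonempty for every partition $\lambda$ of $k$, each $\mathcal{M}(\lambda)$ with $|\lambda|=k$ embeds as an $\FI$-submodule (indeed a direct summand) of $F^k$. The property of being torsion-free is inherited by submodules: if $N \subseteq M$ and $n \in N[x]$ satisfies $n \cdot f = 0$ for some injection $f$, then the same equation holds in $M[y]$, so $n = 0$ by torsion-freeness of $M$. Applying this with $M = F^k$ and $N = \mathcal{M}(\lambda)$ concludes the proof.

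There is no real obstacle here; the argument is almost a tautology once one unwinds the basis for $F^k[x]$ and invokes Proposition \ref{prop:summand}. The only point requiring any care is verifying that post-composition by an injection sends distinct injections to distinct injections, and that torsion-freeness passes to submodules, both of which are immediate from the definitions.
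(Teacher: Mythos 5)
Your proof is correct and takes essentially the same route as the paper: establish torsion-freeness of $\F^k$ by noting post-composition with an injection permutes the basis of injections injectively, then pass to $\mathcal{M}(\lambda)$ as a direct summand of $\F^k$ via Proposition \ref{prop:summand} and the fact that torsion-freeness is inherited by submodules.
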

\begin{proof}
Every arrow of $\FI$ is monic, so post-composition by $f \colon [x] \to [y]$ is always an injection.  Consequently, if $m = \sum_{i=1}^r \alpha_i g_i \in \F^k[x]$ for some distinct injections $g_1, \ldots, g_r \colon [k] \to [x]$ and nonzero scalars $\alpha_1, \ldots, \alpha_r \in \mathbb{Q}$, then the expression 
$$
mf = \sum_{i=1}^r \alpha_i (f \circ g_i) \in \F^k[y]
$$
can have no cancellation since the injections $(f \circ g_i)$ remain distinct, and so if $mf=0$, then $r=0$, and so $m=0$.

By Proposition \ref{prop:summand}, the induced module $\mathcal{M}(\lambda)$ is a summand of the free module $\F^k$ for $k=|\lambda|$.  This proves the claim since any submodule of a torsion-free module is torsion-free.
\end{proof}

\begin{prop} \label{prop:tors}
If $T, F$ are $\FI$-modules with $T$ torsion and $F$ torsion-free, then $\Hom(T, F) = 0$.
\end{prop}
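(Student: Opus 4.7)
The plan is a direct element-chase using naturality of $\FI$-module maps, together with the two definitions: a map must intertwine the $\FI$-action, so it sends torsion elements to torsion elements, and torsion elements of a torsion-free module vanish.

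More precisely, I would fix an arbitrary $\FI$-module map $\varphi \colon T \to F$ and aim to show that the component $\varphi_x \colon T[x] \to F[x]$ is zero for every $x \in \mathbb{N}$. Starting from an element $t \in T[x]$, the torsion hypothesis on $T$ furnishes some injection $f \colon [x] \to [y]$ with $tf = 0$ in $T[y]$. Naturality of $\varphi$ with respect to $f$ then yields $\varphi_y(t)f = \varphi_y(tf) = \varphi_y(0) = 0$, which exhibits $\varphi_x(t) \in F[x]$ as a torsion element. Since $F$ is torsion-free, every torsion element of $F$ vanishes, so $\varphi_x(t) = 0$. As $t$ and $x$ were arbitrary, this gives $\varphi = 0$.

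There is essentially no obstacle: the argument is a two-line diagram chase and the only subtlety is remembering that naturality is applied to the specific witnessing injection $f$ attached to $t$, rather than to a single $f$ working for all of $T$ at once (which need not exist in general). Nothing beyond the bare definitions of torsion, torsion-free, and $\FI$-module morphism is required, and no prior result from the paper is invoked.
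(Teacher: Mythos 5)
Your argument is exactly the paper's proof: fix $\varphi$, pick the witnessing injection $f$ for a given $t$, use naturality to see $\varphi_x(t)f = \varphi_y(tf) = 0$, and conclude $\varphi_x(t) = 0$ by torsion-freeness of $F$. (Minor typo: in the displayed chain you wrote $\varphi_y(t)f$ where you mean $\varphi_x(t)f$, since $t$ lives in $T[x]$; your own final sentence already uses the correct $\varphi_x(t)$.)
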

\begin{proof}
Let $\varphi \colon T \to F$, and suppose $t \in T[x]$ for some $x$.  Since $T$ is torsion, there exists some injection $f \colon [x] \to [y]$ so that $tf = 0$.  However, since $F$ is torsion-free, $\varphi_x(t)f = 0$ implies $\varphi_x(t)=0$, and so $\varphi$ maps every element to zero.
\end{proof}

\begin{prop} \label{prop:fgtors}
If $T$ is torsion and finitely generated, then $T[n]=0$ for all $n \gg 0$.
\end{prop}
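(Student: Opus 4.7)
The plan is to combine finite generation with the torsion hypothesis to produce a single uniform bound $N$ such that each generator is annihilated by every injection out of its degree into $[n]$ for $n \geq N$. First I would use finite generation to pick generators $t_1, \ldots, t_g$ with $t_i \in T[x_i]$ for some $x_1, \ldots, x_g \in \mathbb{N}$. Since $T$ is torsion, for each index $i$ there exists an injection $f_i \colon [x_i] \to [y_i]$ with $t_i f_i = 0$ in $T[y_i]$.

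The main step is to show the stronger statement that $t_i g = 0$ for \emph{every} injection $g \colon [x_i] \to [n]$ whenever $n \geq y_i$. The idea is to factor $g$ through $f_i$: since $|[y_i] \setminus \mathrm{im}(f_i)| = y_i - x_i \leq n - x_i = |[n] \setminus \mathrm{im}(g)|$, there exists an injection $h \colon [y_i] \to [n]$ satisfying $h \circ f_i = g$, built by forcing $h(f_i(j)) = g(j)$ for $j \in [x_i]$ and then extending arbitrarily. Then functoriality gives
$$
t_i g = t_i (h \circ f_i) = (t_i f_i) h = 0.
$$

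Finally, set $N = \max_i y_i$. For any $n \geq N$, every element of $T[n]$ is a $\mathbb{Q}$-linear combination of expressions of the form $t_i g$ with $g \in \FI(x_i, n)$, by the definition of finite generation, and each such expression vanishes by the previous step. Hence $T[n] = 0$ for all $n \geq N$, as required.

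The only real obstacle is the factorization step, and the key combinatorial input there is just the pigeonhole estimate $n - x_i \geq y_i - x_i$ ensuring the complementary injection $h$ exists; everything else is a direct application of functoriality and the definition of a submodule generated by a finite set of vectors.
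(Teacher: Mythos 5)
Your proof is correct and is exactly the intended argument behind the paper's one-line proof ``take $n$ large enough so that every generator is killed''; the factorization $g = h \circ f_i$, available once $n \geq y_i$, is precisely the mechanism that turns ``each generator is killed by some injection'' into ``each generator is killed by every injection into $[n]$.'' Nothing is missing.
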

\begin{proof}
Take $n$ large enough so that every generator is killed.
\end{proof}

\section{Explicit construction of induced $\FI$-modules} \label{sec:induced}
\noindent
Suppose $\mathcal{A}$ is an additive category with composition written $\cdot$, and that
$$
W \colon \mathfrak{S}_k \to \mathcal{A}
$$
is a functor.  In our application, $\mathcal{A}$ will be the category whose morphisms are matrices over $\mathbb{Q}$ and where composition is usual matrix multiplication.  Concretely, the functoriality assumption asserts that, if $\sigma, \tau \in \mathfrak{S}_k$,
$$
 W(\sigma) \cdot W(\tau) = W(\tau \circ \sigma).
$$
Our goal in this section is to describe the induced module 
$$
(\functor{i}_k)_!W \colon \FI \to \mathcal{A}.
$$
For any injection $f \in \FI(x, y)$, define $\nu(f) = f \circ \xi(f)^{-1}$, recalling that $\xi$ is defined so that $\nu(f) \in \OI(x, y)$.  The defining property is equivalent to
\begin{equation} \label{eq:nu}
f = \nu(f) \circ \xi(f).
\end{equation}
Build an $\OI(k,x) \times \OI(k,y)$ block matrix $V(f)$ with $(p,q)$-block entry given by
$$
V(f)_{p,q} = \begin{cases*}
W(\xi(f \circ p)) & if $\nu(f \circ p) = q$ and \\
0 & otherwise.
\end{cases*}
$$
Note that the condition $\nu(f \circ p) = q$ is equivalent to the condition $\mathrm{im}(f \circ p) = \mathrm{im}(q)$ since there is a unique monotone injection with specified image.  Also for this reason, if $\sigma \in \mathfrak{S}_x$, then $\nu(f \circ \sigma) = \nu(f)$, from which we deduce the useful property
\begin{equation} \label{eq:gobble}
\xi(f) \circ \sigma = \xi(f \circ \sigma).
\end{equation}

In the next two lemmas, let $f \colon [x] \to [y]$ and $g \colon [y] \to [z]$ be injections, and let $p \in \OI(k, x)$, $q \in \OI(k, y)$, and $r \in \OI(k,z)$ be monotone injections.
\begin{lem} \label{lem:pqr}
For all $p$ and $r$,
$$
\begin{array}{ccc}
\nu(g \circ f \circ p) = r & \Leftrightarrow & \begin{array}{c} \mbox{there exists a unique $q$ so that} \\ \nu(f \circ p) = q \mbox{ and } \nu(g \circ q) = r. \\ \end{array} \\
\end{array}
$$
\end{lem}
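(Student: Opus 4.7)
The plan is to reduce everything to the elementary fact that $\nu(h) = r$ is equivalent to $\mathrm{im}(h) = \mathrm{im}(r)$, which was already observed in the paragraph defining $V(f)$. Once the statement is translated into a claim about images, both directions become short set-theoretic computations.

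For the forward implication, I would assume $\nu(g \circ f \circ p) = r$, which says $\mathrm{im}(g \circ f \circ p) = \mathrm{im}(r)$. The natural candidate is $q := \nu(f \circ p)$, the unique monotone injection in $\OI(k,y)$ whose image equals $\mathrm{im}(f \circ p)$. Then $\mathrm{im}(g \circ q) = g(\mathrm{im}(q)) = g(\mathrm{im}(f \circ p)) = \mathrm{im}(g \circ f \circ p) = \mathrm{im}(r)$, so $\nu(g \circ q) = r$ as required. Uniqueness of $q$ is automatic, because any $q$ satisfying $\nu(f \circ p) = q$ must be the unique monotone injection with image $\mathrm{im}(f \circ p)$.

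For the reverse implication, I would assume the existence of such a $q$, so that $\mathrm{im}(q) = \mathrm{im}(f \circ p)$ and $\mathrm{im}(r) = \mathrm{im}(g \circ q)$. Applying $g$ to the first equation and substituting into the second gives $\mathrm{im}(r) = g(\mathrm{im}(q)) = g(\mathrm{im}(f \circ p)) = \mathrm{im}(g \circ f \circ p)$, whence $\nu(g \circ f \circ p) = r$.

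I do not expect any real obstacle here; the lemma is purely bookkeeping about images, and the only subtlety is remembering to record the uniqueness of $q$, which is free once one uses the characterization of $\nu$ via images. The lemma is presumably set up this way precisely so that the next lemma can check a functoriality identity for $V$ block by block, by summing over the unique intermediate $q$ when it exists.
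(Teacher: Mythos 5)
Your proof is correct and rests on the same key move as the paper's: take $q := \nu(f\circ p)$, observe uniqueness is automatic, and verify the two $\nu$-equations. The only difference is in how the verification is carried out. The paper computes entirely within the $\nu,\xi$ calculus, using the identities $f = \nu(f)\circ\xi(f)$ and $\xi(f\circ\sigma) = \xi(f)\circ\sigma$ to rewrite $\nu(g\circ q)$ as $\nu(g\circ f\circ p)$ and conversely; you instead translate every condition $\nu(h) = r$ into the image statement $\mathrm{im}(h) = \mathrm{im}(r)$ (which the paper itself records just after defining $V(f)$) and then check two one-line set-theoretic identities. Your route is arguably the more transparent of the two, and it exposes cleanly why uniqueness of $q$ costs nothing; the paper's algebraic route has the mild advantage of using exactly the same identities that the very next lemma (Lemma \ref{lem:rep}) needs, so the computations there read as a continuation. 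Either is a perfectly good proof.
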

\begin{proof}
Suppose $\nu(g \circ f \circ p) = r$, set $q = \nu(f \circ p)$, and compute
\begin{align*}
\nu(g \circ q) &= \nu(g \circ q \circ \xi(f \circ p)) \\
&= \nu(g \circ \nu(f \circ p) \circ \xi(f \circ p)) \\
&= \nu(g \circ f \circ p) \\
&= r,
\end{align*}
using (\ref{eq:gobble}), the definition of $q$, and (\ref{eq:nu}).  Similarly, supposing $\nu(f \circ p) = q$ and $\nu(g \circ q) = r$, compute
\begin{align*}
\nu(g \circ f \circ p) &= \nu(g \circ \nu(f \circ p) \circ \xi(f \circ p)) \\
&= \nu(g \circ q \circ \xi(f \circ p)) \\
&= \nu(g \circ q) \\
&= r. \qedhere
\end{align*}
\end{proof}
\begin{lem} \label{lem:rep} $V(f) \cdot V(g) = V(g \circ f)$.
\end{lem}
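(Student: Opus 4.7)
The plan is a direct block-by-block comparison. Fix $p \in \OI(k,x)$ and $r \in \OI(k,z)$ and expand the $(p,r)$-block of the product:
\[
(V(f) \cdot V(g))_{p,r} = \sum_{q \in \OI(k,y)} V(f)_{p,q} \cdot V(g)_{q,r}.
\]
By the definitions, a term on the right is nonzero only when $\nu(f \circ p) = q$ and $\nu(g \circ q) = r$ simultaneously. Lemma~\ref{lem:pqr} tells me that such a $q$ exists (and is unique) precisely when $\nu(g \circ f \circ p) = r$, which reduces the entire sum to at most one term.

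Next I would simplify that single surviving term. When the condition holds, functoriality of $W$ turns the product $W(\xi(f \circ p)) \cdot W(\xi(g \circ q))$ into $W(\xi(g \circ q) \circ \xi(f \circ p))$, and it remains to identify this inner permutation with $\xi(g \circ f \circ p)$, which is exactly what $V(g \circ f)_{p,r}$ wants. The computation I have in mind is
\[
\xi(g \circ q) \circ \xi(f \circ p) = \xi\bigl(g \circ q \circ \xi(f \circ p)\bigr) = \xi(g \circ f \circ p),
\]
where the first equality applies (\ref{eq:gobble}) with $\sigma = \xi(f \circ p)$ and the second uses $q \circ \xi(f \circ p) = \nu(f \circ p) \circ \xi(f \circ p) = f \circ p$ by (\ref{eq:nu}).

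Finally, I would handle the zero case: if $\nu(g \circ f \circ p) \neq r$, then Lemma~\ref{lem:pqr} says no $q$ can make both factors nonzero simultaneously, so the $(p,r)$-block of $V(f) \cdot V(g)$ vanishes, matching $V(g \circ f)_{p,r} = 0$. Combining the two cases gives the claimed block equality for every $(p,r)$, hence $V(f) \cdot V(g) = V(g \circ f)$.

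The only subtle step is the identity $\xi(g \circ q) \circ \xi(f \circ p) = \xi(g \circ f \circ p)$; everything else is bookkeeping. That step is really just the conjunction of (\ref{eq:nu}) and (\ref{eq:gobble}), so I expect the main work to lie in arranging the definitions cleanly rather than in any genuine obstacle.
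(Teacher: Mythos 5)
Your proof is correct and follows the same route as the paper: expand the $(p,r)$-block, use Lemma~\ref{lem:pqr} to isolate the unique surviving $q = \nu(f \circ p)$, apply functoriality of $W$, then (\ref{eq:gobble}) and (\ref{eq:nu}) to identify the inner permutation with $\xi(g \circ f \circ p)$. Your treatment is slightly more explicit about the vanishing case when $\nu(g \circ f \circ p) \neq r$, which the paper leaves implicit, but the argument is otherwise identical.
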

\begin{proof}
We show that corresponding blocks are equal.
\begin{align*}
\left[ V(f) \cdot V(g) \right]_{p, r} &= \sum_{q \in \OI(k, y)} V(f)_{p,q} \cdot V(g)_{q,r} \\
&= W(\xi(f \circ p)) \cdot W(\xi(g \circ \nu(f \circ p))) \\
&= W(\xi(g \circ \nu(f \circ p)) \circ \xi(f \circ p))) \\
&= W(\xi(g \circ \nu(f \circ p) \circ \xi(f \circ p))) \\
&= W(\xi(g \circ f \circ p)) \\
&= V(g \circ f)_{p,r},
\end{align*}
where we have used the usual formula for matrix multiplication, Lemma \ref{lem:pqr}, the definition of $V$, functoriality of $W$, (\ref{eq:gobble}), (\ref{eq:nu}), and the definition of $V$.
\end{proof}
According to Lemma \ref{lem:rep}, the matrices $V(f)$ fit together to define a functor $\FI \to \mathcal{A}$.  We write $V$ for the resulting $\FI$-module, and show that it has the universal property characterizing the induced module $(\functor{i}_k)_!W$.
\begin{thm} \label{thm:lan}
If $M$ is any $\FI$-module, restriction to degree $k$ gives a natural isomorphism
$$
\Hom(V, M) \overset{\sim}{\longrightarrow} \Hom(W, (\functor{i}_k)^* M)
$$
compatible with maps $M \to M'$.
\end{thm}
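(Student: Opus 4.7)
My plan is to verify the universal property of the left Kan extension directly, by constructing an explicit inverse to restriction. First, I would identify $V[k]$ with $W$ as $\mathfrak{S}_k$-representations: since $\OI(k,k)$ consists only of the identity, the block decomposition of $V[k]$ has a single summand equal to $W$, and for $\sigma \in \mathfrak{S}_k$ one has $\nu(\sigma) = id$ and $\xi(\sigma) = \sigma$, so the matrix $V(\sigma)$ is the single block $W(\sigma)$. Consequently restriction to degree $k$ is well-defined as a map $\Hom(V, M) \to \Hom(W, (\functor{i}_k)^* M)$.

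Second, I would build the inverse. Given an $\mathfrak{S}_k$-equivariant map $\varphi \colon W \to M[k]$, declare the $p$-block of $\Phi_x \colon V[x] \to M[x]$ (for $p \in \OI(k,x)$) to be $M(p) \circ \varphi$, the composite of $\varphi$ with the $\FI$-action of $p \in \FI(k,x)$. There is essentially no other choice, because any lift of $\varphi$ must commute with the action of $p$, and one checks directly that $V(p)$ has a single nonzero block equal to the identity at position $(id, p)$, so that the image of $V(p)$ is precisely the $p$-block of $V[x]$.

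Third, and this is the main content of the argument, I would verify that the assembled $\Phi$ commutes with the action of an arbitrary $f \in \FI(x,y)$. Comparing the $p$-to-$q$ blocks of $\Phi_y \circ V(f)$ and $M(f) \circ \Phi_x$, the former is nonzero only when $q = \nu(f \circ p)$, in which case it equals $M(q) \circ \varphi \circ W(\xi(f \circ p))$. Applying equivariance of $\varphi$ commutes $W(\xi(f \circ p))$ past $\varphi$ as $M(\xi(f \circ p))$; functoriality of $M$ then assembles $M(q) \circ M(\xi(f \circ p)) = M(q \circ \xi(f \circ p))$, which by (\ref{eq:nu}) equals $M(f \circ p) = M(f) \circ M(p)$, matching the $p$-block of the right side.

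Finally, I would close the loop. Restricting the constructed $\Phi$ back to degree $k$ reproduces $\varphi$, since $V[k] = W$ has its sole block at $id \in \OI(k,k)$ and $M(id) = id$; conversely, an arbitrary $\Phi \colon V \to M$ is forced by the block calculation above to be rebuilt from $\Phi_k$. Naturality in $M$ is immediate because the construction of $\Phi$ from $\varphi$ involves only post-composition with the structure maps $M(p)$. The main obstacle is the block computation in the third step, but Lemma \ref{lem:pqr} together with (\ref{eq:nu}) already packages the relevant combinatorics, so the verification reduces to bookkeeping.
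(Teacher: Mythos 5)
Your proposal is correct and matches the paper's proof in essentially every step: identify $V[k]$ with $W$, observe that $V(p)$ for $p\in\OI(k,x)$ has a single nonzero identity block, deduce that the $p$-block of any lift $\Phi_x$ is forced to be $M(p)\circ\varphi$, and then verify compatibility for a general injection $f$ by tracking the unique surviving block $q=\nu(f\circ p)$ and simplifying with $\mathfrak{S}_k$-equivariance, functoriality of $M$, and (\ref{eq:nu}). The only difference is cosmetic: you use left-composition notation, while the paper writes actions and compositions in its right-multiplication (transposed) convention, so your equations appear reversed relative to the source but encode the same identities.
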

\begin{proof}
Suppose $\psi \colon V \to M$ is a map of $\FI$-modules.  Such a map consists of a component $\psi(n)$ for every object $[n] \in \FI$.  Using this notation, the purported isomorphism takes $\psi$ to its component at $k$:
$$
\psi(k) \colon V[k] \to M[k],
$$
noting that $V[k] = W^{\oplus \OI(k,k)} = W$ since there is only one monotone injection $[k] \to [k]$.

In general, $V[n] = W^{\oplus \OI(k,n)}$, and so the component $\psi(n)$ takes the form of a column block matrix with rows indexed by $\OI(k,n)$ and block entries written $\psi(n)_{p,1} \colon W \to M[n]$.  Since $\psi$ is a map of $\FI$-modules, its components satisfy
$$
\psi(x) \cdot M(f) = V(f) \cdot \psi(y)
$$
for any injection $f \colon [x] \to [y]$.%, and therefore
%$$
%(\psi(x) \cdot M(f))_{p,1} = \sum_{q \in \OI(k,y)} V(f)_{p,q} \cdot \psi(y)_{q,1}
%$$
%for all $p \in \OI(k,x)$.

Let us pause to examine the block matrix $V(o)$ for $o \in \OI(k,n)$.  Since $\OI(k,k)=\{1_k\}$, there is only one row.  The condition $\nu(o \circ 1_k) = q$ is equivalent to $o=q$, and so $V(o)$ has a single nonzero block entry, occurring in position $(1_k, o)$.  Moreover, $V(o)_{1_k,o} = W(1_k)$.  It follows that $V(o) \cdot \psi(n) = \psi(n)_{o,1}$.

Using this observation, and the compatibility equation $\psi(k) \cdot M(o) = V(o) \cdot \psi(n)$, we show that every entry of $\psi(n)$ may be recovered from the all-important $\psi(k)$:
$$
\psi(n)_{o,1} = V(o) \cdot \psi(n) = \psi(k) \cdot M(o).
$$
It remains to show that any choice of $\psi(k)$ compatible with the action of $\mathfrak{S}_k$ gives rise to a system of maps $\psi(n)$ that are compatible with the action of $\FI$.

Let $f \colon [x] \to [y]$, suppose $p \in \OI(k,x)$, and compute
\begin{align*}
\left[ V(f) \cdot \psi(y) \right]_{p,1}  &= \sum_{q \in \OI(k,y)} V(f)_{p,q} \cdot \psi(y)_{q,1}\\
&= V(f)_{p,\nu(f \circ p)} \cdot \psi(y)_{\nu(f \circ p),1}\\
&= W(\xi(f \circ p)) \cdot \psi(k) \cdot M(\nu(f \circ p)) \\
&= \psi(k) \cdot M(\xi(f \circ p)) \cdot M(\nu(f \circ p)) \\
&= \psi(k) \cdot M( \nu(f \circ p) \circ \xi(f \circ p) ) \\
&= \psi(k) \cdot M( f \circ p ) \\
&= \psi(k) \cdot M(p) \cdot M( f ) \\
&= \psi(x)_{p,1} \cdot M( f ) \\
&=  \left[\psi(x) \cdot M(f)\right]_{p,1} \qedhere
\end{align*}
\end{proof}
\noindent
From Theorem \ref{thm:lan} we obtain a well-known corollary; see \cite[(4)]{CEF15}.
\begin{cor} \label{cor:ind}
If $W$ is an $\mathfrak{S}_k$-representation and $n \geq k$, there is an isomorphism of $\mathfrak{S}_n$-representations
$$
(\functor{i}_n)^* (\functor{i}_k)_! W \cong \mathrm{Ind}_{\mathfrak{S}_k \times \mathfrak{S}_{n-k}}^{\mathfrak{S}_n} W \boxtimes \mathbf{1}_{\mathfrak{S}_{n-k}},
$$
where $\mathbf{1}_{\mathfrak{S}_{n-k}}$ denotes the trivial representation.
\end{cor}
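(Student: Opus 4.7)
My plan is to unpack the $\mathfrak{S}_n$-representation $((\functor{i}_k)_!W)[n]$ using the explicit block-matrix construction just developed, and then recognize it as induced. Write $V = (\functor{i}_k)_!W$. By construction $V[n] = W^{\oplus \OI(k,n)}$, and for $\sigma \in \mathfrak{S}_n$ the block $V(\sigma)_{p,q}$ equals $W(\xi(\sigma \circ p))$ when $\nu(\sigma \circ p) = q$ and vanishes otherwise. I would split the analysis into the ``permutation part'' (how $\sigma$ permutes summands) and the ``coefficient part'' (the twist $\xi(\sigma \circ p) \in \mathfrak{S}_k$ acting on the matched summand).

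For the permutation part, identify $\OI(k,n)$ with $\binom{[n]}{k}$ via $p \mapsto \mathrm{im}(p)$; the assignment $p \mapsto \nu(\sigma \circ p)$ then coincides with the standard $\mathfrak{S}_n$-action on $k$-subsets. This action is transitive, and the stabilizer of the basepoint $\iota \in \OI(k,n)$, given by the canonical inclusion $[k] \hookrightarrow [n]$, is exactly $\mathfrak{S}_k \times \mathfrak{S}_{n-k}$, embedded in $\mathfrak{S}_n$ as permutations preserving both $[k]$ and $\{k+1,\ldots,n\}$. For the coefficient part at $\iota$: if $\tau = (\tau_1, \tau_2) \in \mathfrak{S}_k \times \mathfrak{S}_{n-k}$, then $\tau \circ \iota = \iota \circ \tau_1$, and because $\iota$ is already monotone, the uniqueness of the factorization $\tau \circ \iota = \nu(\tau \circ \iota) \circ \xi(\tau \circ \iota)$ forces $\xi(\tau \circ \iota) = \tau_1$. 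Hence the stabilizer acts on the $\iota$-summand as $W \boxtimes \mathbf{1}_{\mathfrak{S}_{n-k}}$.

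To conclude, I would invoke the standard identification of $G$-equivariant vector bundles on a transitive $G$-set $G/H$ with $\mathrm{Ind}_H^G$ applied to the fiber over the basepoint. Concretely, I would choose coset representatives $\{s_p \in \mathfrak{S}_n\}_{p \in \OI(k,n)}$ satisfying $s_p \circ \iota = p$, and write down the candidate isomorphism $\mathrm{Ind}_{\mathfrak{S}_k \times \mathfrak{S}_{n-k}}^{\mathfrak{S}_n}(W \boxtimes \mathbf{1}_{\mathfrak{S}_{n-k}}) \to V[n]$ sending $s_p \otimes w$ to $w$ in the $p$-th summand. The main obstacle is the equivariance check: writing $\sigma \cdot s_p = s_{\nu(\sigma \circ p)} \cdot h$ with $h = (h_1, h_2) \in \mathfrak{S}_k \times \mathfrak{S}_{n-k}$, one applies both sides to $\iota$ and compares with $\sigma \circ p = \nu(\sigma \circ p) \circ \xi(\sigma \circ p)$ to conclude $h_1 = \xi(\sigma \circ p)$, matching precisely the twist applied by $V(\sigma)$.
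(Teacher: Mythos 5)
Your proof is correct and takes essentially the same route the paper intends: Theorem~\ref{thm:lan} gives the explicit model $V$ for $(\functor{i}_k)_! W$, and one then recognizes $V[n] = W^{\oplus \OI(k,n)}$ with its block-matrix $\mathfrak{S}_n$-action as the induced representation. The paper cites this last recognition as well known (referring to \cite{CEF15}), whereas you supply the orbit/stabilizer and coset-representative computation that justifies it.
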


\section{The infinite diagram $\lambda^+$ and its properties} \label{sec:infinite}
We make precise the previously-indicated idea of a diagram $\lambda^+$ consisting of $\lambda$ hanging below an infinitely-long top row.

A subset $S \subseteq \mathbb{N}_+^2$ is called a \textbf{horizontal strip} if it contains at most one element in each column.  For example, the top row $\{(1,i) : i \in \mathbb{N}_+\}$ is an (infinite) horizontal strip.  Despite its appearance, the subset $\{(1,5), (6,10)\}$ is also a horizontal strip, so this standard terminology is perhaps not self-evident.

If $\lambda, \lambda'$ are diagrams with $\lambda \subseteq \lambda'$, and if $(\lambda') \setminus \lambda$ is a horizontal strip, then we say that $\lambda'$ is a \textbf{horizontal-strip-extension} of $\lambda$, written $\lambda \subseteq_h \lambda'$.  This notation makes it easy to state a classical result about induced representations.
\begin{thm}[Pieri's rule]
If $\lambda$ is a diagram of size $k$, there is an isomorphism of $\mathfrak{S}_{k+n}$-representations.
$$
\mathrm{Ind}_{\mathfrak{S}_k \times \mathfrak{S}_n}^{\mathfrak{S}_{k+n}} \, \mathcal{W}(\lambda) \boxtimes \mathbf{1}_{\mathfrak{S}_{n}} \cong \bigoplus_{\substack{|\lambda'| = k+n \\ \lambda \subseteq_h \lambda'}} \mathcal{W}(\lambda')
$$
where $\mathbf{1}_{\mathfrak{S}_{n}}$ denotes the trivial representation of $\mathfrak{S}_n$.
\end{thm}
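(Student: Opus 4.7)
The plan is to reduce this to the classical Pieri identity on symmetric functions. First I would invoke the Frobenius characteristic map $\mathrm{ch}$, the graded $\mathbb{Q}$-linear ring isomorphism between $\bigoplus_n R(\mathfrak{S}_n) \otimes \mathbb{Q}$ and the ring of symmetric functions over $\mathbb{Q}$. Under $\mathrm{ch}$ the Specht module $\mathcal{W}(\lambda)$ corresponds to the Schur function $s_\lambda$, the trivial representation $\mathbf{1}_{\mathfrak{S}_n}$ corresponds to the complete homogeneous symmetric function $h_n$, and induction from $\mathfrak{S}_k \times \mathfrak{S}_n$ up to $\mathfrak{S}_{k+n}$ corresponds to ordinary multiplication of symmetric functions. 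Applying $\mathrm{ch}$ to the left-hand side therefore produces the product $s_\lambda \cdot h_n$.

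Second, I would appeal to the classical Pieri identity
$$
s_\lambda \cdot h_n = \sum_{\substack{|\lambda'| = k+n \\ \lambda \subseteq_h \lambda'}} s_{\lambda'},
$$
which may be proved via the combinatorics of semistandard Young tableaux or via the Jacobi--Trudi determinantal identity; see \cite{FH04} for a complete treatment. Because $\mathrm{ch}$ is an isomorphism, equality of Frobenius characteristics forces the asserted isomorphism of $\mathfrak{S}_{k+n}$-representations.

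The only step requiring attention is to verify that the notion $\lambda \subseteq_h \lambda'$ defined here (namely $\lambda \subseteq \lambda'$ with $\lambda'/\lambda$ containing at most one box per column) agrees verbatim with the skew-shape condition appearing in the symmetric-function statement of Pieri's rule; this is immediate from the definition given just above the theorem. If one wished to avoid symmetric functions altogether, an alternative would be to compute both sides directly using Young symmetrizers combined with Frobenius reciprocity, but this route is substantially longer and duplicates effort already packaged into the Frobenius characteristic map. The main obstacle to a self-contained proof is thus entirely absorbed by citing the classical symmetric-function identity.
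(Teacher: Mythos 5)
The paper states Pieri's rule as a classical fact and supplies no proof, so there is no argument of the paper's to compare against. Your proposal is correct and is the standard route: pass through the Frobenius characteristic isomorphism $\mathrm{ch}$, under which $\mathcal{W}(\lambda) \mapsto s_\lambda$, $\mathbf{1}_{\mathfrak{S}_n} \mapsto h_n$, and $\mathrm{Ind}_{\mathfrak{S}_k \times \mathfrak{S}_n}^{\mathfrak{S}_{k+n}}(-\boxtimes-)$ becomes multiplication; then cite the symmetric-function identity $s_\lambda h_n = \sum_{\lambda \subseteq_h \lambda'} s_{\lambda'}$. Your check that the paper's notion of $\subseteq_h$ agrees with the classical skew-shape condition is the only bookkeeping step, and you handle it correctly: while the paper's bare definition of ``horizontal strip'' is unusually permissive, once one imposes that $\lambda \subseteq \lambda'$ are both diagrams, the condition ``$\lambda' \setminus \lambda$ has at most one box per column'' reduces to the usual interlacing condition appearing in Pieri's rule. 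Since $\mathrm{ch}$ restricts to a bijection between isomorphism classes of genuine (not merely virtual) representations and Schur-positive symmetric functions, equality of characteristics does yield the claimed isomorphism of $\mathfrak{S}_{k+n}$-representations.
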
 

From Corollary \ref{cor:ind} we recognize $\mathrm{Ind}_{\mathfrak{S}_k \times \mathfrak{S}_n}^{\mathfrak{S}_{k+n}} \, \mathcal{W}(\lambda) \boxtimes \mathbf{1}_{\mathfrak{S}_{n}} $ as the degree $k+n$ part of the induced module $\mathcal{M}(\lambda)$, and so
\begin{equation} \label{eq:mlambda}
\mathcal{M}(\lambda)[k+n] \cong \bigoplus_{\substack{|\lambda'| = k+n \\ \lambda \subseteq_h \lambda'}} \mathcal{W}(\lambda').
\end{equation}

We write $\lambda^+$ for the maximal horizontal-strip-extension of $\lambda$, which is the union of all $\lambda'$ with $\lambda \subseteq_h \lambda'$, and is explicitly given by the formula
$$
\lambda^+ = \left\{ (i, j) \in \mathbb{N}_+^2 \mbox{ so that $(i-1,\,j) \in \lambda$ or $i = 1$} \right\}.
$$
In other words, $\lambda^+$ is $\lambda$ with an extra box in each column---even in the empty columns.  Consequently, the difference $(\lambda^+) \setminus \lambda$ is a horizontal strip, and so we always have $\lambda \subseteq_h (\lambda^+)$.  %In other words, the maximal horizontal-strip-extension is indeed a horizontal-strip-extension.

As a finite approximation to the infinite diagram $\lambda^+$, define
$$
\lambda^{+n} = \lambda^+ \cap \left\{ (i, j) \mbox{ with $j \leq n$} \right\}.
$$
Once $n \geq \lambda_1$, the diagram $\lambda^{+n}$ is $\lambda$ with a new top row of length $n$.  Consequently, for large $n$, $|\lambda^{+n}| = |\lambda| + n$ and $\lambda^{+n} \subseteq_h \lambda^+$.  
\begin{defn} \label{defn:mu}
Let $\lambda$ be a finite diagram, and set $k=|\lambda|$.  The multiplicity of the infinite diagram $\lambda^+$ in an $\FI$-module $M$ is defined as the limit
$$
\mu(\lambda^+, M) = \lim_{n \to \infty} \dim_{\mathbb{Q}} \Hom_{\mathfrak{S}_{k + n}}\left(\mathcal{W}(\lambda^{+n}), M[k+n] \right).
$$
\end{defn}
\begin{lem} \label{lem:diagrams}
The following are equivalent for any pair of finite diagrams $\lambda \subseteq \lambda'$:
\begin{enumerate}
\item $\lambda \subseteq_h \lambda'$
\item $\lambda' \subseteq_h (\lambda^+)$
\item for all $n \geq (\lambda')_1$, $\lambda' \subseteq_h (\lambda^{+n})$.
%\item there exists $N$ so that $\lambda' \subseteq_h (\lambda^{+N})$
%\item there exists $N$ so that for all $n \geq N$, $\lambda' \subseteq_h (\lambda^{+n})$.
\end{enumerate}
\end{lem}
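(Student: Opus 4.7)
The plan is to reduce every condition to an elementary statement about column lengths and then compare the resulting inequalities. For a diagram $\nu$ and an index $j \in \mathbb{N}_+$, write $\nu^{(j)}$ for the number of elements of $\nu$ in column $j$; since $\nu$ is an order ideal in $\mathbb{N}_+^2$, one has $(i,j) \in \nu$ iff $i \leq \nu^{(j)}$, and the sequence $j \mapsto \nu^{(j)}$ is non-increasing. In this language, $\nu \subseteq \nu'$ iff $\nu^{(j)} \leq (\nu')^{(j)}$ for every $j$, and under that inclusion, $\nu \subseteq_h \nu'$ is equivalent to the pointwise bound $(\nu')^{(j)} - \nu^{(j)} \leq 1$, since ``at most one box per column'' is the definition of a horizontal strip.

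Next I would unravel the definition of $\lambda^+$. From the formula $(i,j) \in \lambda^+$ iff $i=1$ or $(i-1,j) \in \lambda$, column $j$ of $\lambda^+$ consists of rows $1, 2, \ldots, \lambda^{(j)} + 1$, so $(\lambda^+)^{(j)} = \lambda^{(j)} + 1$ for every $j$. Truncating to $j \leq n$ gives $(\lambda^{+n})^{(j)} = \lambda^{(j)} + 1$ for $j \leq n$ and $(\lambda^{+n})^{(j)} = 0$ for $j > n$.

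With this bookkeeping, each of (1), (2), (3) collapses to the single inequality $(\lambda')^{(j)} \leq \lambda^{(j)} + 1$ for all $j$. Condition (1) is literally this inequality. For (2), the containment $\lambda' \subseteq \lambda^+$ reads $(\lambda')^{(j)} \leq \lambda^{(j)} + 1$, while the horizontal-strip constraint $(\lambda^+)^{(j)} - (\lambda')^{(j)} \leq 1$ simplifies to $(\lambda')^{(j)} \geq \lambda^{(j)}$, which is automatic from the standing hypothesis $\lambda \subseteq \lambda'$; so (2) $\Leftrightarrow$ (1). For (3), fix $n \geq (\lambda')_1$: in columns $j > n$ both $(\lambda^{+n})^{(j)}$ and $(\lambda')^{(j)}$ vanish and the two constraints are vacuous, whereas in columns $j \leq n$ the column lengths of $\lambda^{+n}$ agree with those of $\lambda^+$, and the previous argument applies verbatim. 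Thus $\lambda' \subseteq_h \lambda^{+n}$ is equivalent to (1) for each permissible $n$, so the universally quantified (3) is also equivalent to (1).

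The only real obstacle is setting up the column-length notation cleanly and remembering to use the bound $n \geq (\lambda')_1$ precisely where it is needed, namely to guarantee that $(\lambda')^{(j)} = 0$ for $j > n$ so that the columns outside the window $[1,n]$ contribute nothing to the horizontal-strip check. Once that is handled, each equivalence is a one-line inequality manipulation.
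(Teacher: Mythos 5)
Your proof is correct, and it takes a genuinely different route from the paper's. The paper proves the three implications $(1)\Rightarrow(2)\Rightarrow(3)\Rightarrow(1)$ cyclically, each step an inclusion-chase on set differences: it repeatedly uses the fact that a subset of a horizontal strip is a horizontal strip, together with containments of the form $(\lambda^+)\setminus\lambda' \subseteq (\lambda^+)\setminus\lambda$, and so on. You instead introduce the column-length statistic $\nu^{(j)}$, observe that (under the standing hypothesis $\lambda\subseteq\lambda'$) a horizontal-strip extension is exactly the pointwise inequality $(\nu')^{(j)}-\nu^{(j)}\leq 1$, compute $(\lambda^+)^{(j)}=\lambda^{(j)}+1$ and its truncation, and then show that each of (1), (2), (3) reduces to the single inequality $(\lambda')^{(j)}\leq\lambda^{(j)}+1$ for all $j$; the extra constraint from (2), namely $(\lambda')^{(j)}\geq\lambda^{(j)}$, is absorbed by the hypothesis $\lambda\subseteq\lambda'$, and the constraint $n\geq(\lambda')_1$ in (3) is used exactly where you flag it, to empty out columns $j>n$. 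What your approach buys is that the equivalence becomes a single normal form instead of a cycle, which makes it obvious at a glance that all three conditions coincide and would extend painlessly if more equivalent conditions were added; what the paper's approach buys is brevity, since each implication is one or two lines once the right set inclusion is spotted. Both arguments are sound; the only thing worth flagging is that in the step for (2) you should note explicitly (as you implicitly do) that the horizontal-strip constraint $(\lambda^+)^{(j)} - (\lambda')^{(j)} \le 1$ requires $\lambda' \subseteq \lambda^+$ as a precondition and that your containment check and the horizontal-strip check together are equivalent to it, which you do handle correctly.
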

\begin{proof}
\textbf{(1) $\implies$ (2): } The diagrams $\lambda$ and $\lambda'$ are both subsets of $\lambda^+$.  Since $\lambda \subseteq \lambda'$, $(\lambda^+) \setminus \lambda' \subseteq (\lambda^+) \setminus \lambda$.
The larger set is a horizontal strip, so the subset is as well.

\textbf{(2) $\implies$ (3): } As $\lambda'$ is contained in the first $(\lambda')_1$ columns, it is also contained in the first $n$, and so $\lambda' \subseteq \lambda^{+n}$.  Then, since $(\lambda^+) \setminus \lambda'$ is a horizontal strip, the subset $(\lambda^{+n}) \setminus \lambda'$ is also a horizontal strip.

\textbf{(3) $\implies$ (1): } Take $n = (\lambda')_1$.  Since $\lambda' \subseteq (\lambda^{+n})$, $\lambda' \setminus \lambda \subseteq (\lambda^{+n}) \setminus \lambda$.  This last set is a horizontal strip of size $n$, so $\lambda' \setminus \lambda$ is a subset of a horizontal strip.
\end{proof}

\begin{prop} \label{prop:mu} 
Let $\lambda, \lambda'$ be diagrams of sizes $k, k' \in \mathbb{N}$.  We have
$$
\mu\left(\lambda^+, \mathcal{M}(\lambda') \right) = \dim_{\mathbb{Q}} \Hom(\mathcal{M}(\lambda'), \mathcal{M}(\lambda)).
$$
\end{prop}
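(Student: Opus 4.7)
The plan is to show that both sides of the identity equal the indicator $[\lambda \subseteq_h \lambda']$ (that is, $1$ if $\lambda$ is a horizontal-strip subdiagram of $\lambda'$, and $0$ otherwise).

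For the right-hand side, I would invoke Theorem~\ref{thm:lan} applied to the presentation $\mathcal{M}(\lambda') = (\functor{i}_{k'})_! \mathcal{W}(\lambda')$ with $k' = |\lambda'|$ to identify
$$\Hom(\mathcal{M}(\lambda'), \mathcal{M}(\lambda)) \cong \Hom_{\mathfrak{S}_{k'}}(\mathcal{W}(\lambda'), \mathcal{M}(\lambda)[k']).$$
Equation~(\ref{eq:mlambda}) then decomposes the target as $\bigoplus_{|\mu|=k',\;\lambda \subseteq_h \mu} \mathcal{W}(\mu)$, and Schur's lemma (using the irreducibility and pairwise non-isomorphism of the $\mathcal{W}(\mu)$ from Theorem~\ref{thm:specht}) collapses the dimension to $[\lambda \subseteq_h \lambda']$.

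For the left-hand side, the same Pieri-plus-Schur calculation, now applied to $\mathcal{M}(\lambda')[k+n]$, identifies the multiplicity of $\mathcal{W}(\lambda^{+n})$ in $\mathcal{M}(\lambda')[k+n]$ with the indicator $[\lambda' \subseteq_h \lambda^{+n}]$, valid for every $n$ large enough that $|\lambda^{+n}| = k+n$ and $k+n \geq k'$. This gives
$$\mu(\lambda^+, \mathcal{M}(\lambda')) = \lim_{n \to \infty}[\lambda' \subseteq_h \lambda^{+n}].$$

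It then remains to reconcile the two indicators, i.e., to prove that $\lim_n [\lambda' \subseteq_h \lambda^{+n}] = [\lambda \subseteq_h \lambda']$. When $\lambda \subseteq \lambda'$ this is exactly Lemma~\ref{lem:diagrams} (1) $\Leftrightarrow$ (3); since only the length of row~$1$ of $\lambda^{+n}$ depends on $n$, the left-hand indicator is already eventually constant, so the limit coincides with the ``for all $n \geq \lambda'_1$'' statement of Lemma~\ref{lem:diagrams}. When $\lambda \not\subseteq \lambda'$, the right-hand indicator is trivially $0$, and the left-hand indicator is eventually $0$ as well by a short column-counting check: choosing $i$ with $\lambda_i > \lambda'_i$, the column $\lambda'_i + 1$ lies both in row~$i$ of $\lambda^{+n}$ (of length $\lambda_{i-1} \geq \lambda_i$, with the convention $\lambda_0 = n$) and in row~$i+1$ of $\lambda^{+n}$ (of length exactly $\lambda_i$), so it contributes boxes in two distinct rows of $\lambda^{+n} \setminus \lambda'$, precluding the horizontal-strip property. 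The main (and only real) obstacle is this small combinatorial verification outside the stated hypothesis of Lemma~\ref{lem:diagrams}; the remaining steps are routine applications of adjunction, Pieri's rule, and Schur's lemma.
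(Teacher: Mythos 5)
Your proof is correct and follows essentially the same route as the paper's: compute both sides as the indicator of a horizontal-strip relation via Theorem~\ref{thm:lan}, equation~(\ref{eq:mlambda}), and Schur's lemma, then reconcile the two indicators using Lemma~\ref{lem:diagrams}. The one place you go beyond the paper is in noticing that Lemma~\ref{lem:diagrams} is stated only for pairs with $\lambda \subseteq \lambda'$, so that the case $\lambda \not\subseteq \lambda'$ needs a separate check; the paper applies the lemma without comment, implicitly using the (true, but unstated) fact that conditions~(2) and~(3) of the lemma already force $\lambda \subseteq \lambda'$. Your column-counting verification is correct, and could be streamlined: if $(i,j) \in \lambda \setminus \lambda'$, then $(i+1,j) \in \lambda^{+n}$ by construction, and since $\lambda'$ is an order ideal not containing $(i,j)$ it cannot contain $(i+1,j)$ either, so column $j$ of $\lambda^{+n} \setminus \lambda'$ has two boxes and the horizontal-strip condition fails.
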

\begin{proof}
Use Pieri, Lemma \ref{lem:diagrams}, Pieri again, (\ref{eq:mlambda}), and the universal property of $\mathcal{M}(\lambda')$:
\begin{align*}
\mu(\lambda^+, M(\lambda')) &= \lim_{n \to \infty} \dim_{\mathbb{Q}} \Hom_{\mathfrak{S}_{k + n}}(\mathcal{W}(\lambda^{+n}), \mathcal{M}(\lambda')[k+n]) \\
&= \lim_{n \to \infty} \; \begin{cases*} 
1 & if $\lambda' \subseteq_h \lambda^{+n}$ \\
0 & otherwise.
\end{cases*} \\
&= \begin{cases*} 
1 & if $\lambda' \subseteq_h \lambda^{+}$ \\
0 & otherwise.
\end{cases*} \\
&=
\begin{cases*} 
1 & if $\lambda \subseteq_h \lambda'$ \\
0 & otherwise.
\end{cases*} \\
&= \dim_{\mathbb{Q}} \Hom_{\mathfrak{S}_{k'}}(\mathcal{W}(\lambda'), \; \mathrm{Ind}_{\mathfrak{S}_k \times \mathfrak{S}_{k-k'}}^{\mathfrak{S}_{k'}} \, \mathcal{W}(\lambda) \boxtimes \mathbf{1}_{\mathfrak{S}_{k'-k}} ) \\
&= \dim_{\mathbb{Q}} \Hom_{\mathfrak{S}_{k'}}(\mathcal{W}(\lambda'), \mathcal{M}(\lambda)[k'] ) \\
&= \dim_{\mathbb{Q}} \Hom(\mathcal{M}(\lambda'), \mathcal{M}(\lambda) ). \qedhere
\end{align*}
\end{proof}

\begin{lem} \label{lem:mu}
If $M$ is a finitely presented $\FI$-module over the rational numbers, then the multiplicity of $\lambda^+$ is given by
$$
\mu(\lambda^+, M) = \mathrm{dim}_{\mathbb{Q}} \Hom(M, \mathcal{M}(\lambda)).
$$
\end{lem}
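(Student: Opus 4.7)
The plan is to reduce the claim to the case of induced modules, where the formula is exactly Proposition \ref{prop:mu}, by showing that both $\mu(\lambda^+, -)$ and $\dim_{\mathbb{Q}} \Hom(-, \mathcal{M}(\lambda))$ are additive on short exact sequences of finitely generated $\FI$-modules and both vanish on finitely generated torsion modules. A finite filtration argument, drawing on the Sam-Snowden structure theory, then closes the gap.

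First I would establish additivity on a short exact sequence $0 \to A \to B \to C \to 0$ of finitely generated $\FI$-modules. For $\dim_{\mathbb{Q}} \Hom(-, \mathcal{M}(\lambda))$, I use the injectivity of $\mathcal{M}(\lambda)$ from Theorem \ref{thm:ssinjective}: the contravariant functor $\Hom(-, \mathcal{M}(\lambda))$ turns the sequence into a short exact sequence of finite-dimensional $\mathbb{Q}$-vector spaces, so the dimensions add. For $\mu(\lambda^+, -)$, evaluation at $[k+n]$ is exact, and $\Hom_{\mathfrak{S}_{k+n}}(\mathcal{W}(\lambda^{+n}), -)$ is exact because, over $\mathbb{Q}$, the Specht module is projective. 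Dimensions therefore add in each finite degree, and since the relevant sequence stabilizes for finitely presented inputs (again by Sam-Snowden), additivity descends to the limit defining $\mu$.

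Next, both sides vanish on finitely generated torsion $T$: $\mu(\lambda^+, T) = 0$ by Proposition \ref{prop:fgtors}, while $\Hom(T, \mathcal{M}(\lambda)) = 0$ by combining Proposition \ref{prop:mtf} (the induced module is torsion-free) with Proposition \ref{prop:tors}. Combined with the agreement on induced modules from Proposition \ref{prop:mu}, these three facts let me induct on the length of a finite filtration of $M$ — supplied by Sam-Snowden structure theory — whose successive quotients are either an $\mathcal{M}(\lambda_i)$ or finitely generated torsion. The inductive step invokes additivity on the short exact sequence extracted from the filtration and the base cases just established.

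The main obstacle is arranging the finite filtration cleanly and confirming that the limit defining $\mu$ genuinely commutes with the additivity degree by degree for every piece; in particular, one must be confident that stabilization is uniform enough along the filtration. As a concrete alternative that sidesteps the filtration, I would choose a presentation $P_1 \xrightarrow{d} P_0 \to M \to 0$ with $P_0, P_1$ finite direct sums of $\mathcal{M}(\lambda')$'s via Proposition \ref{prop:summand}, write
$$\mu(\lambda^+, M) = \dim_{\mathbb{Q}} \coker\bigl(\Hom_{\mathfrak{S}_{k+n}}(\mathcal{W}(\lambda^{+n}), d[k+n])\bigr) \quad \text{for } n \gg 0,$$
write $\dim_{\mathbb{Q}} \Hom(M, \mathcal{M}(\lambda)) = \dim_{\mathbb{Q}} \ker(d^*)$ using injectivity, and then reconcile the two via Proposition \ref{prop:mu} (which gives equal source and target dimensions for the relevant pairs) together with a rank count showing the two extracted maps have the same rank for $n$ sufficiently large.
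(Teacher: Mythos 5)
The overall reduction strategy — both sides additive, both vanish on finitely generated torsion modules, both agree on the $\mathcal{M}(\lambda')$ — mirrors the paper's proof, and the individual ingredients (injectivity of $\mathcal{M}(\lambda)$ for one additivity, Propositions \ref{prop:mtf}, \ref{prop:tors}, \ref{prop:fgtors} for vanishing, Proposition \ref{prop:mu} for agreement) are all the right ones. The problem is the mechanism by which you propose to assemble them.

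Your primary argument relies on a finite filtration of an arbitrary finitely generated $M$ whose successive quotients are each either some $\mathcal{M}(\lambda_i)$ or finitely generated torsion. The Sam-Snowden structure theory does \emph{not} supply such a filtration, and in general it does not exist. What \cite[Proposition 4.9.2]{SS16} actually provides is a $\mathbb{Z}$-basis of the Grothendieck group $K_0$ of finitely generated $\FI$-modules over $\mathbb{Q}$, given by the classes of the $\mathcal{M}(\lambda')$ and the torsion modules $\mathcal{I}(\lambda')$. A typical $[M]$ is a $\mathbb{Z}$-linear combination of these basis classes with \emph{negative coefficients allowed}: for instance, the saturation of $M$ (which is a sum of induced modules) receives $M/T$ as a submodule with torsion cokernel, so $[M] = [T] + [\Sigma M] - [\Sigma M/(M/T)]$, a relation in $K_0$ that is not realizable as a filtration. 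The correct assembly is exactly the $K_0$ argument the paper uses: since both $\mu(\lambda^+, -)$ and $\dim_{\mathbb{Q}} \Hom(-, \mathcal{M}(\lambda))$ are additive in short exact sequences, each descends to a group homomorphism $K_0 \to \mathbb{Z}$; two such homomorphisms agreeing on a $\mathbb{Z}$-basis agree on all of $K_0$, hence on every class $[M]$. No filtration is needed, and you already have all the other ingredients in place — you only need to replace the (nonexistent) filtration induction with this linearity-on-$K_0$ observation. As a minor point, once you go the $K_0$ route, you do not even need to establish additivity of $\mu(\lambda^+,-)$ degree-by-degree and then argue a stabilization is compatible with the limit; it follows immediately from Sam-Snowden's local cohomology results, which the paper tacitly leans on.

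Your ``concrete alternative'' via a presentation $P_1 \xrightarrow{d} P_0 \to M \to 0$ by induced modules is a genuinely different route, but the final step — ``a rank count showing the two extracted maps have the same rank for $n$ sufficiently large'' — is precisely the nontrivial content of the lemma, and it is asserted rather than proved. The two maps in question, $\Hom_{\mathfrak{S}_{k+n}}(\mathcal{W}(\lambda^{+n}), d[k+n])$ and $\Hom(d, \mathcal{M}(\lambda))$, arise from a covariant and a contravariant functor respectively; knowing their sources and targets have matching dimensions (by Proposition \ref{prop:mu}) does not force the ranks to match. Showing that rank equality directly would require a naturality argument identifying the two maps (or their images), which is more work than the $K_0$ route and is not supplied here.
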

\begin{proof}
Note that the function $\mu(\lambda^+, -)$ is additive in short exact sequences.  This lets us use a result of Sam-Snowden that gives a basis for the K-theory of finitely generated $\FI$-modules over $\mathbb{Q}$.

Specifically, we rely on \cite[Proposition 4.9.2]{SS16}, which implies that any function additive in short exact sequences is determined by its values on the induced modules $\mathcal{M}(\lambda')$, and another collection of finitely generated $\FI$-modules $\mathcal{I}(\lambda')$ that are torsion.

By Proposition \ref{prop:mtf}, the induced module $\mathcal{M}(\lambda)$ is torsion-free, and so by Proposition \ref{prop:tors}, $\Hom(\mathcal{I}(\lambda'), \mathcal{M}(\lambda)) = 0$ since $\mathcal{I}(\lambda')$ is torsion.  Consequently,
$$
\mu(\lambda^+, \mathcal{I}(\lambda')) = 0 = \dim_{\mathbb{Q}} \Hom(\mathcal{I}(\lambda'), \mathcal{M}(\lambda))
$$
since all eventual multiplicities vanish in the finitely-generated torsion $\FI$-module $\mathcal{I}(\lambda')$ using Proposition \ref{prop:fgtors}.  On the other hand, 
$$
\mu(\lambda^+, \mathcal{M}(\lambda')) = \dim_{\mathbb{Q}} \Hom(\mathcal{M}(\lambda'), \mathcal{M}(\lambda))
$$
by Proposition \ref{prop:mu}.  According to a result of Sam-Snowden, which we have given as Theorem \ref{thm:ssinjective}, the modules $\mathcal{M}(\lambda)$ are injective in the category of finitely generated $\FI$-modules, and so the function $\dim_{\mathbb{Q}} \Hom(-,\mathcal{M}(\lambda))$ is additive in short exact sequences.  It then follows from \cite[Proposition 4.9.2]{SS16} that these two functions coincide for all finitely presented $\FI$-modules $M$, as required.
\end{proof}

\section{Proof of Theorem \ref{thm:main}} \label{sec:proof}
\begin{proof}
By Theorem \ref{thm:lan}, the explicitly-defined module $V$ provides a model for the induced module $(\functor{i}_k)_! W$ where $W$ is any $\mathfrak{S}_k$-representation.  Using the formula for the action of $\mathfrak{S}_k$ on the Specht module $\mathcal{W}(\lambda)$ given in (\ref{eq:specht}), we obtain a formula for the action of an injection $f \colon [x] \to [y]$ on the induced module $\mathcal{M}(\lambda) = (\functor{i}_k)_! \mathcal{W}(\lambda)$:
$$
\left( \mathcal{M}(\lambda) \right)(f) = \mathbb{A}_{\lambda}(1_x)^{-1} \cdot \mathbb{A}_{\lambda}(f).
$$
Applying the functor $\Hom(-, \mathcal{M}(\lambda))$ to the presentation
$$
\bigoplus_{j = 1}^r \F^{y_j} \overset{Z}{\longrightarrow} \bigoplus_{i=1}^g \F^{x_i} \longrightarrow M \longrightarrow 0
$$
gives the exact sequence
$$
0 \longrightarrow \Hom(M, \mathcal{M}(\lambda)) \longrightarrow \bigoplus_{i=1}^g \Hom(\F^{x_i}, \mathcal{M}(\lambda)) \overset{Z^*}{\longrightarrow} \bigoplus_{j=1}^r \Hom(\F^{y_j}, \mathcal{M}(\lambda)),
$$
which simplifies by Yoneda's lemma:
$$
0 \longrightarrow \Hom(M, \mathcal{M}(\lambda)) \longrightarrow \bigoplus_{i=1}^g \left( \mathcal{M}(\lambda) \right)[x_i] \xrightarrow{\left( \mathcal{M}(\lambda) \right)(Z)} \bigoplus_{j=1}^r  \left( \mathcal{M}(\lambda) \right)[y_j].
$$
As a result,
\begin{align*}
\mathrm{dim}_{\mathbb{Q}} \Hom(M, \mathcal{M}(\lambda)) &= \mathrm{corank} \left( \mathcal{M}(\lambda) \right)(Z) \\
&= \mathrm{corank} \left[ \left( \bigoplus_{i=1}^g \mathbb{A}_{\lambda}(1_{x_i}) \right)^{-1} \cdot \mathbb{A}_{\lambda}Z \right] \\
&= \mathrm{corank} \left( \mathbb{A}_{\lambda}Z \right).
\end{align*}
By Lemma \ref{lem:mu}, $\mu(\lambda^+, M) = \mathrm{dim}_{\mathbb{Q}} \Hom(M,  \mathcal{M}(\lambda))$, and we are done.
\end{proof}

\newpage
\section{Appendix: Sage code} \label{sec:code}
\begin{verbatim}
def circ(g, f):
    return [g[v - 1] for v in f]

def zeta(tableau):
    return Permutation([entry for row in tableau for entry in row])

def xi(injection):
    monotonic = sorted(injection)
    return [monotonic.index(i) + 1 for i in injection]

def chi(r, c):
    boxes = zip(r, c)
    if len(boxes) != len(set(boxes)):
        return 0
    lex = sorted(boxes)
    perm = [lex.index(b) + 1 for b in boxes]
    return Permutation(perm).signature()

def row_word(tableau):
    k = sum([len(row) for row in tableau])
    return [i + 1 for l in range(k)
                  for i, row_i in enumerate(tableau)
                  for entry in row_i if entry == l + 1]

def col_word(tableau):
    k = sum([len(row) for row in tableau])
    return [j + 1 for l in range(k) for row in tableau
                  for j, entry in enumerate(row) if entry == l + 1]

def AA_entry(f, (p, t), (q, u)):
    a, b = circ(row_word(u), xi(circ(f, p))), col_word(t)
    return chi(a, b) if sorted(circ(f, p)) == q else 0

def OI(k, n):
    return [sorted(s) for s in Subsets(range(1, n + 1), k)]

def AA(partition, x, f, y):
    k = sum(partition)
    tableaux = list(StandardTableaux(partition))
    rows = [(p, t) for p in OI(k, x) for t in tableaux]
    cols = [(q, u) for q in OI(k, y) for u in tableaux]
    entries = [AA_entry(f, pt, qu) for pt in rows for qu in cols]
    return matrix(QQ, len(rows), len(cols), entries)
\end{verbatim}

\bibliographystyle{amsalpha}
\bibliography{references}
\end{document}